\newtheorem{lemma}{Lemma}[section]
\newtheorem{theorem}[lemma]{Theorem}
\newtheorem{corollary}[lemma]{Corollary}
\newtheorem{conjecture}[lemma]{Conjecture}
\newtheorem{prop}[lemma]{Proposition}
\theoremstyle{definition}
\newtheorem{definition}[lemma]{Definition}
\newtheorem{remark}[lemma]{Remark}
\theoremstyle{remark}
\newtheorem*{proof*}{Proof}
\numberwithin{equation}{section}
\def\Spec{{\bf {Spec}}}
\def\RHom{{\mathbf{R}\rm{Hom}}}
\def\D{\mathrm{D}}
\def\Ext{{\mathrm{Ext}}}
\def\Hom{{\mathrm{Hom}}}
\def\End{{\mathrm{End}}}
\def\Spec{{\mathrm{Spec\ }}}
\def\deg{{\mathrm{deg}}}
\def\perf{{\mathfrak{Perf}}}
\def\PP{{\mathbb P}}
\def\ZZ{{\mathbb Z}}
\def\CC{{\mathbb C}}
\def\BB{{\mathbb B}}
\def\bR{{\bf{R}}}
\def\cO{{\cal{O}}}
\def\cL{{\cal{L}}}
\def\cN{{\cal{N}}}
\def\cT{{\cal{T}}}
\def\cU{{\cal{U}}}
\def\fg{{\mathfrak{g}}}
\def\fh{{\mathfrak{h}}}
\def\red{{\mathrm{red}}}
\def\st{{\mathrm{st}}}
\def\deg{{\mathrm{deg}}}
\def\MF{{\mathrm{MF}}}
\def\CMR{{\text{\underline{CM}~R}}}
\def\ep{{\epsilon}}
\def\Sym{{\mathrm{Sym}}}
\def\Ac{{A_{\rm{con}}}}
\def\Bc{{B_{\rm{con}}}}
\title{Contraction algebra and singularity of three-dimensional flopping contraction}
\date{}
\author[1]{Zheng Hua\thanks{huazheng@maths.hku.hk}}
\affil[1]{Department of Mathematics, the University of Hong Kong}
\begin{document}
\maketitle
\begin{abstract}
In \cite{DW13}, Donovan and Wemyss introduced the contraction algebra of flopping curves in 3-folds. They conjectured that the contraction algebra determines the formal neighborhood of the underlying singularity of the contraction.
In this paper, we prove that the contraction algebra together with its natural $A_\infty$-structure constructed in \cite{HT16}, determine 
the formal neighborhood of the singularity. 
\end{abstract}

\section{Introduction}\label{sec:intro}
In ~\cite{DW13}, \cite{DW15}, 
Donovan and Wemyss constructed certain algebras called 
\emph{contraction algebras} associated with birational morphisms $f:X\to Y$ with at most one-dimensional fibers. The contraction algebras pro-represent the functors of non-commutative deformations of reduced exceptional fiber of $f$. It has remarkable applications to the study of autoequivalences of derived categories \cite{DW13}, birational geometry \cite{DW15} and  enumerative geometry \cite{HT16}.

When $f$ is a 3-fold flopping contraction, the contraction algebra generalizes various classical invariants of rigid rational curves, including Reid's width for 
curves with normal bundle $\cO\oplus\cO(-2)$, the length of the scheme theoretical exceptional fiber of $f$ and Katz's genus zero Gopakumar-Vafa invariants \cite{HT16}. Indeed, Donovan and Wemyss conjectured that 
the contraction algebra
recovers the formal neighborhood at the 
flopping curve, so that the contraction 
algebra contains enough information for the local 
geometry of the flopping curve.
\begin{conjecture}\emph{(\cite[Conjecture~1.4]{DW13})}\label{conjDW}
 Suppose that $X\to Y$ and $X^\prime\to Y^\prime$ are 3-dimensional flopping contractions of smooth quasi-projective 3-folds $X$ and $X^\prime$, to threefolds $Y$ and $Y^\prime$ with isolated singular points $p$ and $p^\prime$ respectively.   To these, associate the contraction algebras $\Ac$ and $\Bc$. Then the completions of stalks at $p\in Y$ and $p^\prime\in Y^\prime$ are isomorphic if and 
only if $\Ac\cong \Bc$ as algebras.
\end{conjecture}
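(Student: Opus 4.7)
The strategy is to reduce the plain-algebra conjecture to the main theorem of this paper, which asserts that the contraction algebra together with its $A_\infty$-enhancement from \cite{HT16} determines the formal neighborhood. Granted that theorem, it suffices to establish the following rigidity statement: on a contraction algebra $\Ac$ arising from a three-dimensional flopping contraction, the natural $A_\infty$-structure of \cite{HT16} is determined, up to $A_\infty$-gauge equivalence, by the underlying associative algebra $\Ac$. The ``only if'' direction of the conjecture is essentially immediate, because $\Ac$ is a derived invariant of the formal neighborhood through Van den Bergh's NCCR, so I concentrate on the ``if'' direction.

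First I would record the structural facts: $\Ac$ is a finite-dimensional, basic, symmetric algebra, and the $A_\infty$-structure of \cite{HT16} is minimal and cyclic Calabi--Yau of dimension $3$. By standard Kadeishvili-type deformation theory, the set of cyclic $A_\infty$-structures extending a fixed associative product on $\Ac$ is classified, modulo gauge, by the cyclic Hochschild complex of $\Ac$ in degrees $\geq 3$. Equivalently, any such structure is encoded by a noncommutative superpotential $W$ in a tensor algebra $T$ lifting $\Ac$, with $\Ac$ recovered from $(T,W)$ via a Ginzburg-type construction and the singularity $\widehat{\cO}_{Y,p}$ recovered as the commutative quotient of the associated contraction ring, as in \cite{DW13,HT16}.

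The heart of the argument is then to show that any two cyclic CY3 $A_\infty$-enhancements of the same associative $\Ac$ define the same formal singularity. One route is a mutation argument: the mutation class of the pair $(T,W)$ should already be determined by the derived category of $\Ac$, which is obviously an invariant of $\Ac$ as an algebra, and, combined with the behaviour of formal flopping contractions under NCCR mutation in the Iyama--Wemyss framework, this would pin down $\widehat{\cO}_{Y,p}$. A complementary route is a direct obstruction-theoretic calculation: show that for the finite-dimensional symmetric algebras $\Ac$ that actually arise as contraction algebras of cDV singularities, the cyclic Hochschild cohomology $HH^{\geq 3}_{\mathrm{cyc}}(\Ac,\Ac)$ is either trivial or entirely absorbed by gauge, so that all higher products $m_{\geq 3}$ are forced by $m_2$ and the Frobenius form.

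The main obstacle is precisely this rigidity step. Controlling $HH^{\geq 3}(\Ac,\Ac)$ is delicate because $\Ac$ can have arbitrarily large dimension and complicated quiver presentations as the length of the flopping contraction grows, so a uniform vanishing statement is not at all obvious and probably requires using the strong additional hypothesis that $\Ac$ admits \emph{some} cyclic CY3 $A_\infty$-structure. A possible fallback, should a general argument prove elusive, is a case-by-case verification exploiting the known constraints on the quiver and the low-length classification of contraction algebras, combined with the reconstruction of $W$ from $\Ac$ via noncommutative Mather--Yau-type arguments for isolated cDV singularities. If this cohomological rigidity can be established in the cDV setting, combining it with the main theorem of the paper yields Conjecture~\ref{conjDW} in the form stated.
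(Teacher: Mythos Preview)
The statement you are trying to prove is Conjecture~\ref{conjDW}, and the paper does \emph{not} prove it. It is explicitly left open: the author proves only the weaker Theorem~\ref{mainthm}, which assumes Morita equivalence of the $A_\infty$-enhancements, and states plainly that ``it is not clear whether the algebra structure alone is sufficient to determine the formal neighborhood of the singular point.'' So there is no proof in the paper to compare your proposal against.

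Your proposal also does not constitute a proof. You correctly identify the reduction: given Theorem~\ref{mainthm}, the conjecture would follow from the rigidity statement that any two cyclic CY3 $A_\infty$-enhancements of the same associative $\Ac$ are gauge equivalent. But you do not prove this rigidity; you only sketch two possible routes (a mutation argument and a vanishing-of-obstructions argument) and acknowledge that both face serious difficulties. In fact the paper supplies direct evidence that the naive vanishing approach cannot work: it observes that the Hochschild cohomology of $\Ac$ as an ordinary algebra is infinite-dimensional in every case except the Atiyah flop. Thus the deformation space governing higher $A_\infty$-products on $\Ac$ is large, and one cannot expect $HH^{\geq 3}(\Ac,\Ac)$ or its cyclic variant to vanish outright. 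Any rigidity argument would have to show that this infinite-dimensional space is nonetheless entirely absorbed by gauge transformations, which is far from formal and is precisely the content of the open conjecture. Your ``fallback'' of a case-by-case verification or a noncommutative Mather--Yau argument is not carried out either; these are themselves substantial open problems. In short, your proposal is a reasonable outline of why the conjecture is plausible and how one might attack it, but it contains no argument for the missing rigidity step, and the paper itself regards that step as the essential unresolved difficulty.
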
\label{conjDW}
In a joint  work with Toda \cite{HT16}, we prove that the contraction algebra is equipped with a natural $\ZZ/2$-graded $A_\infty$-structure.
In this paper, we prove a modified version of the above conjecture with the contraction algebra replaced by its $A_\infty$-enhancement. 

\begin{theorem}(Theorem \ref{mainthm})
 Suppose that $X\to Y$ and $X^\prime\to Y^\prime$ are 3-dimensional flopping contractions of smooth quasi-projective 3-folds $X$ and $X^\prime$, to threefolds $Y$ and $Y^\prime$ with isolated singular points $p$ and $p^\prime$ respectively.    Then the completions of stalks at $p\in Y$ and $p^\prime\in Y^\prime$ are isomorphic if and 
only if $\Ac$ and $\Bc$ are Morita equivalent as $\ZZ/2$-graded $A_\infty$-algebras.
\end{theorem}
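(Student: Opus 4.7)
The plan is to translate the theorem into a reconstruction statement for the $\ZZ/2$-graded $A_\infty$-category of matrix factorizations of the underlying singularity.

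First I would invoke the fact that a 3-fold flopping contraction $f: X \to Y$ has compound Du Val singularities, so the formal neighborhood $R := \widehat{\cO}_{Y,p}$ is a hypersurface, $R \cong \CC[[x_1,x_2,x_3,x_4]]/(g)$ for some $g$ with isolated critical point at the origin. By Van den Bergh's tilting construction, the formal completion of $X$ along $f^{-1}(p)$ carries a tilting bundle $E = \cO_X \oplus E'$ whose endomorphism algebra is an NCCR of $R$, and by definition the contraction algebra is $\Ac = \End_X(E')/[\cO_X]$, where one kills morphisms factoring through $\cO_X$. This presents $\Ac$ as the endomorphism algebra of the image $\bar{E'}$ of $E'$ in the singularity category $\D_{sg}(R) \simeq [\HMF(g)]$. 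The crucial input I would use is that the $A_\infty$-structure on $\Ac$ constructed in \cite{HT16} is quasi-isomorphic to the intrinsic $\ZZ/2$-graded $A_\infty$-structure on $\RHom_{\HMF(g)}(\bar{E'}, \bar{E'})$, with $\bar{E'}$ serving as a compact generator of $\HMF(g)$.

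Granting this identification, Morita equivalence of $(\Ac, m_\bullet)$ and $(\Bc, m_\bullet')$ as $\ZZ/2$-graded $A_\infty$-algebras becomes equivalent to a $\ZZ/2$-graded $A_\infty$-equivalence $\HMF(g) \simeq \HMF(g')$. The easy direction of the theorem, namely $\widehat{\cO}_{Y,p} \cong \widehat{\cO}_{Y',p'} \Rightarrow \Ac$ and $\Bc$ Morita equivalent, is then formal: an isomorphism of the complete local rings induces an equivalence of matrix factorization categories, which transports the generator $\bar{E'}$ and yields the Morita equivalence at the $A_\infty$-level. The substantive converse relies on a reconstruction theorem of Dyckerhoff-Hirano type: for an isolated hypersurface singularity, the $\ZZ/2$-graded $A_\infty$-category $\HMF(g)$ determines $g$ up to right equivalence, hence determines $R$ up to isomorphism of complete local rings. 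Conceptually, $g$ can be recovered via the Hochschild cohomology $\HH^\ast(\HMF(g))$, which encodes the Jacobian ring $R/(\partial g)$ together with its natural BV-structure.

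The principal obstacle will be the identification of the Hua-Toda $A_\infty$-structure with the intrinsic one coming from matrix factorizations, and in particular verifying that $\bar{E'}$ is a compact generator of $\HMF(g)$: while the NCCR result gives that $\cO_X \oplus E'$ tilts $\D^b(X)$, removing the $\cO_X$ summand and descending to the singularity category requires a careful use of the flopping hypothesis (one-dimensional fibers, so no divisorial contributions, and generation by the components of the exceptional curve). A secondary difficulty is pinning down the exact reconstruction statement one needs in the $\ZZ/2$-periodic setting: one must check that the Morita equivalence class of the $A_\infty$-algebra $\Ac$ captures the same information as the quasi-equivalence class of the category $\HMF(g)$, which is clean precisely because $\bar{E'}$ is a classical generator.
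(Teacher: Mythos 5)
Your setup coincides with the paper's: reduce to the hypersurface $R=\CC[[x_1,\ldots,x_4]]/(W)$, identify $\Ac$ with the endomorphism $A_\infty$-algebra of the compact generator $N^{\st}$ of $\MF^\infty(W)$ (the paper's Propositions \ref{generator} and \ref{inftycontr}, using Iyama--Wemyss and Dyckerhoff), and note the easy direction is formal. The gap is in the converse, where you invoke ``a reconstruction theorem of Dyckerhoff--Hirano type: the $\ZZ/2$-graded $A_\infty$-category $\HMF(g)$ determines $g$ up to right equivalence.'' No such theorem is available to cite; proving precisely this kind of statement \emph{is} the substantive content of the theorem. What Dyckerhoff actually gives is only $HH^\bullet(\MF(W))\cong M_W$, the Milnor algebra, and the Milnor algebra alone does not determine the germ: by Mather--Yau (Theorem \ref{MY}) one needs the Tjurina algebra $M_W/(W)$, i.e.\ one must locate the class of $W$ inside $HH^\bullet$ in a Morita-invariant way. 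Your alternative suggestion that the ``Jacobian ring together with its natural BV-structure'' recovers $g$ is unsubstantiated: no such BV datum is specified Morita-invariantly in the $\ZZ/2$-graded setting (it would require extra choices such as a volume form), and no theorem is cited that Milnor algebra plus BV operator determines the singularity.

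The missing ideas are exactly the paper's two key steps. First, Efimov's Lemma 8.2 (Lemma \ref{lemma-Efi}, via the Koszul-dual generator $k^{\st}$ and $B_W=\Hom_{\MF(W)}(k^{\st},k^{\st})$) identifies the class of the $A_\infty$-products $m$ on the minimal model with the class $[W]\in M_W$ under $HH^\bullet(B_W)\cong M_W$ (Corollary \ref{m=W}). Second, Keller's exact triangle for the upper-triangular $A_\infty$-algebra built from the bimodule $X=\Hom_{\MF(W)}(k^{\st},N^{\st})$ (Lemma \ref{biCart} together with Proposition \ref{mumap}) shows that the induced isomorphism $HH^\bullet(B_W)\cong HH^\bullet(\Ac)$ carries $m^{B}$ to $m^{A}$, i.e.\ the distinguished class $[W]$ is preserved under the Morita equivalence and is computed intrinsically from the $A_\infty$-structure on $\Ac$. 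Only with both steps does one recover the Tjurina algebra from the Morita class of $(\Ac,m)$ and conclude via Mather--Yau. Without them, your argument reduces the theorem to a statement at least as strong as the theorem itself.
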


A famous theorem of Mather and Yau (Theorem \ref{MY}) claims that (the germ of) an isolated hypersurface singularity is determined by its Tjurina algebra (see Section 4 for the definition). For a 3-dimensional flopping contraction $f:X\to Y$, it is well known that $Y$ has isolated hypersurface singularities (see \cite{Re83}). Therefore, one possible way to prove Conjecture \ref{conjDW} is to show that the contraction algebra recovers the Tjurina algebra. 

Our approach can be summarized as follows. First, it is proved by Dyckerhoff that the Hochschild cohomology of the derived category of singularities $\D_{sg}(Y)$ is isomorphic with the Milnor algebra of $Y$ (\cite{Dyc09}). It follows from Proposition \ref{generator} and Theorem \ref{inftycontr} that the contraction algebra together with its canonical $A_\infty$-structure is Morita equivalent with $\D_{sg}(Y)$. So the Hochschild cohomology of the $A_\infty$ contraction algebra is isomorphic with the Milnor algebra of $Y$. Finally, we prove that the class in the Milnor algebra, represented by the defining equation of $Y$, is precisely the class in the Hochschild cohomology represented by the $A_\infty$-products on the contraction algebra. To prove this, we need to use a result of Efimov \cite{Efi09} that computes the $A_\infty$-structure on the minimal model of the endomorphism dg-algebra for the structure sheaf of the singular point. Meanwhile, we use the dg-algebras of upper-triangular matrices (see definition in Section \ref{subsec:HHAinfty}), due to Happel \cite{Ha89}, Buchwicz \cite{Bu03} and Keller \cite{Keller03} to show that the class in the Hochschild cohomology represented by the $A_\infty$-products is invariant under Morita equivalences. This leads to a proof of Theorem \ref{mainthm}.

Theorem \ref{mainthm} is weaker than Conjecture \ref{conjDW}. It is not clear whether the algebra structure alone is sufficient to determine the formal neighborhood of the singular point. We observe that if one considers the Hochschild cohomology of the contraction algebra (without the $A_\infty$-structure) then the answer is an infinite dimensional space except for the Atiyah flop case. This seems to suggest that if one wants to recover the singularity via its Milnor algebra (and Tjurina algebra). Then the $A_\infty$-structure is crucial. However, we do not have a counter-example to Conjecture \ref{conjDW} so far.

The paper is organized as follows. In Section \ref{sec:pre}, we recall the construction of contraction algebra  and its $A_\infty$-structure. The situation here is slightly more general than that in \cite{HT16}, where the reduced fiber is assumed to be smooth and irreducible.
In Section \ref{sec:HH}, we survey several results on Hochschild cohomology and Morita theory of $A_\infty$-algebras that are needed for the proof of the main theorem. Several results on Hochschild cohomology of category of matrix factorizations are summarized in Section \ref{subsec:HHmf}, including the lemma of Efimov (Lemma \ref{lemma-Efi}). In Section \ref{sec:proof}, we give the proof of the main result. The readers who are familiar with the theory of Hochschild cohomology of $A_\infty$-algebras can skip Section 3.1 and 3.2.

\paragraph{Acknowledgments.} We are grateful to Will Donovan and Michael Wemyss for many valuable discussions. The research was supported by RGC Early Career grant no. 27300214 and NSFC Science Fund for Young Scholars no. 11401501.

\section{Contraction algebra and its $A_\infty$-structure}\label{sec:pre}
\subsection{Contraction algebra}
Let $X$ be a smooth quasi-projective complex 3-fold.  
A flopping contraction is a birational morphism 
\begin{equation}\label{flop}
f: X\to Y
\end{equation}
which is an isomorphism in codimension one. We have ${\bf{R}}f_*\cO_X=\cO_Y$ and $Y$ has Gorenstein terminal singularities. Three-dimensional Gorenstein singularities that admit small resolutions have been classified by Reid \cite{Re83}. By Corollary 1.12 \cite{Re83}, they are compound Du Val singularities, which means a generic hyperplane section of $Y$ through the singular point has Du Val singularity. In particular, compound Du Val singularities are hypersurface singularities. Therefore, when a 3-dimensional flopping contraction $f:X\to Y$ is discussed we may always assume that $Y$ is a hypersurface in $\CC^4$. 

We denote the exceptional locus of $f$ by $C$ and its reduced scheme by $C^{\red}$. 
It is well known that $C^{\red}$ has a decomposition
 \[
 C^{\red}=\bigcup_{i=1}^n C_i^{\red}.
 \]
where $C^{\red}_i\cong \PP^1$.

Let $p \in Y$ be the image of $C$ under $f$, 
and we set $R=\hat{\cO}_{Y,p}$ the formal completion of $\cO_Y$ at the singular point $p$. 
We take the completion of \ref{flop}
\begin{equation}\label{compflop}
\hat{f}:\hat{X}:=X\times_Y \Spec R\to \hat{Y} :=\Spec R.
\end{equation}
Let $\cL_i$ be a line bundle on $\hat{X}$ such that $\deg_{C_j}\cL_i=\delta_{ij}$. Define $\cN_i$ to be given by the maximal extension 
\begin{equation}\label{extN}
\xymatrix{
0\ar[r] & \cL_i^{-1}\ar[r] & \cN_i\ar[r] &\cO_{\hat{X}}^{\oplus r_i}\ar[r] &0
}
\end{equation} associated to a minimal set of $r_i$ generators of $H^1(\hat{X},\cL_i^{-1})$. 
Denote $\bigoplus_{i=1}^n \cN_i$ by $\cN$.

We set $\cU:= \cO_{\hat{X}}\oplus \cN$, $N:=\bR f_*\cN=f_*\cN$ and define
\[
A:= \End_{\hat{X}}(\cU).
\]
By Lemma 4.2.1 \cite{VdB04}, $A\cong \End_{R}(R\oplus N)$. 
Van den Bergh (\cite[Section~3.2.8]{VdB04}) showed that $\cU$ is a tilting object, i.e. $F:=\RHom_{\hat{X}}(\cU,-)$ defines an equivalence of triangulated categories $\D^b(\rm{coh}(\hat{X}))\cong \D^b(\text{mod-}A)$.

\begin{definition}(\cite[Definition~2.8]{DW13})  \label{A_con}
Let $R$, $N$ and $A$  be those defined in \ref{extN}.
The contraction algebra $\Ac$ is defined to be $A/I_{\rm{con}}$, where $I_{\rm{con}}$ is the two sided ideal of $A$ consisting of morphisms $R\oplus N\to R\oplus N$ factoring through a summand of finite sums of $R$.
\end{definition}

\begin{remark}
In \cite{DW13} and \cite{DW15}, Donovan and Wemyss gave an alternative definition of the contraction algebra as the algebra pro-representing the non-commutative deformations of $\bigoplus_{i=1}^n\cO_{C^{\red}_i}$. It is equivalent with Definition \ref{A_con} under the setting of this paper. We refer to Theorem 3.9 of \cite{DW15} for the proof of this statement. 
\end{remark}

\subsection{$A_\infty$-structure on $\Ac$}

Let $Y$ be a quasi-projective scheme. Denote $\D^b({\rm{coh}}(Y))$ for the bounded derived category of coherent sheaves on $Y$ and $\perf(Y)$ for the full subcategory consisting of perfect complexes on $Y$. We define a triangulated category $\D_{sg}(Y)$ as the quotient of $\D^b({\rm{coh}}(Y))$ by $\perf(Y)$ 
(cf.~\cite[Definition~1.8]{Orlov02}). Consider the case when $Y$ is a hypersurface in a smooth affine variety $\Spec B$ defined by $W=0$ for a function $W\in B$. Denote $B/(W)$ by $R$.
Buchweitz, and independently Orlov,  proved that the derived category of singularities $\D_{sg}(Y)$ is equivalent, as triangulated categories, with the stable category of maximal Cohen-Macaulay $R$-modules $\CMR$, which is equivalent with the homotopy category of the dg-category of matrix factorizations $\text{MF}(W)$ (cf.~\cite[Theorem~3.9]{Orlov02}). The objects of $\text{MF}(W)$ are the ordered pairs:
\[\bar{E}=(E,\delta_E):=\xymatrix{
E_1\ar@/^1mm/[r]^{\delta_1} &E_0\ar@/^1mm/[l]^{\delta_0} 
}
\] where $E_0$ and $E_1$ are finitely generated projective $B$-modules  and the compositions $\delta_0\delta_1$ and $\delta_1\delta_0$ are the multiplications by the element $W\in B$. 
The precise definition of the category of matrix factorizations $\text{MF}(W)$ can be found in~\cite[Section~3]{Orlov02}.

Below, we assume that $Y$ is an affine hypersurface with one isolated singular point. 
By taking the completion at the singular point, we may further assume that $B=\CC[[x_1,\ldots,x_n]]$ and $W$ has an isolated critical point at the origin. We define the \emph{Milnor algebra} of $Y$ to be $M_W:=\CC[[x_1,\ldots,x_n]]/J_W$ with $J_W$ being the ideal $(\partial_1 W,\ldots, \partial_n W)$.

Let $\cT$ be a triangulated category admitting infinite coproducts. An object $X$ in $\cT$ is called \emph{compact} if the functor $\Hom_\cT(X,-)$ commutes with infinite coproducts. We call $X$ a generator of $\cT$ if the right orthogonal complement of $X$ is equivalent to 0. The category $\MF(W)$ embeds in the category $\MF^\infty(W)$, consisting of matrix factorizations of possibly infinite rank.
In \cite{Dyc09}, Dyckerhoff proved that the structure sheaf of the singular point is a compact generator of  $\MF^\infty(W)$ (Theorem 4.1 \cite{Dyc09}). And $\MF(W)$ consists of compact objects in $\MF^\infty(W)$. 
As a consequence, the Hochschild cohomology of $\text{MF}(W)$ is isomorphic, as $\CC$-algebras, with $M_W$ (Corollary 6.4 of \cite{Dyc09}).

Let $\hat{f}:\hat{X}\to \hat{Y}=\Spec R$ be a 3-dimensional flopping contraction considered in the previous section, where $R= \CC[[x_1,\ldots,x_4]]/(W)$. A coherent sheaf $E$ on $\hat{Y}$ can be identified with a finitely generated $R$-module $M$. It represents an object in the category $\D_{sg}(\hat{Y})$. We denote the corresponding matrix factorization of $M$ by $M^{\st}$, called the \emph{stabilization} of $M$.

From the theorem of Dyckerhoff (Theorem 4.1 \cite{Dyc09}), we know that the stabilization of the structure sheaf of the origin, denoted by $k^{\st}$, is a compact generator of $\MF^\infty(W)$. For the singularity underlying a 3-dimensional flopping contraction, a different generator exists by  a theorem of Van den Bergh  (\cite[Section~3.2.8]{VdB04}) and the work of Iyama and Wemyss \cite{IW10}.
\begin{prop}\label{generator}
Let $\cU=\cN\oplus\cO_{\hat{X}}$ be the tilting bundle on $\hat{X}$ and $N$ be the $R$-module defined as $\hat{f}_* \cN$.
Its stabilization $N^{\st}$ is a compact generator of ${\MF^\infty}(W)$.
\end{prop}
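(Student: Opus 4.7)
The plan is to verify the two conditions separately. Compactness is immediate: $N$ is a finitely generated $R$-module, so $N^{\st}$ is a finite-rank matrix factorization and therefore lies in $\MF(W)$, which has already been identified with the subcategory of compact objects of $\MF^\infty(W)$. For generation, we invoke Dyckerhoff's theorem (recalled above), by which $k^{\st}$ is itself a compact generator of $\MF^\infty(W)$. In a compactly generated triangulated category, a compact object is a compact generator if and only if its thick closure inside the compact part exhausts it, so it is enough to show that $k^{\st}\in\mathrm{thick}_{\MF(W)}(N^{\st})$.

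The main input is the tilting equivalence $F=\RHom_{\hat{X}}(\cU,-):\D^b(\mathrm{coh}(\hat{X}))\xrightarrow{\sim}\D^b(\text{mod-}A)$ of Van den Bergh, combined with the fact that $A=\End_R(R\oplus N)$ is a non-commutative crepant resolution of $R$, hence of finite global dimension. The latter forces $\D^b(\text{mod-}A)=\perf(A)=\mathrm{thick}(A)$, and transporting through $F^{-1}$ this reads $\D^b(\mathrm{coh}(\hat{X}))=\mathrm{thick}(\cU)=\mathrm{thick}(\cO_{\hat{X}},\cN)$.

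We then apply this principle to the coherent sheaves $\cO_{C_i^{\red}}$ on $\hat{X}$. Each lies in $\mathrm{thick}(\cO_{\hat{X}},\cN)$, so the triangulated functor $\bR\hat{f}_*$ carries it into $\mathrm{thick}(R,N)$ inside $\D^b(\mathrm{coh}(\hat{Y}))$. Since $C_i^{\red}\cong\PP^1$ contracts to the point $p$ and $H^\bullet(\PP^1,\cO_{\PP^1})$ is concentrated in degree zero with value $k$, a direct calculation gives $\bR\hat{f}_*\cO_{C_i^{\red}}=k$, the skyscraper at $p$. Composing with the Verdier quotient $\pi:\D^b(\mathrm{coh}(\hat{Y}))\to\D_{sg}(\hat{Y})\simeq\MF(W)$ kills $R$ (being perfect) and sends $N$ to $N^{\st}$, so $k^{\st}=\pi(k)\in\mathrm{thick}(N^{\st})$, as required.

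No serious obstacle is anticipated. The only subtle point is the translation between generating $\MF^\infty(W)$ under arbitrary coproducts and generating its subcategory $\MF(W)$ of compact objects as a thick subcategory; this equivalence is standard for compactly generated triangulated categories but worth recording explicitly. The remaining ingredients — the NCCR property of $A$, the boundedness of $\bR\hat{f}_*$ (guaranteed by the finite fibers of $\hat{f}$), and the $\PP^1$-cohomology computation — are entirely routine.
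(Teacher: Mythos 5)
Your argument is correct, but it takes a genuinely different route from the paper. The paper disposes of the proposition in two lines by citation: generation of the singularity category by $N$ is quoted from Proposition 5.10 of \cite{IW10} (using that $R\oplus N$ is a tilting object), and compactness of $N^{\st}$ from Section 4 of \cite{Dyc09}. You instead re-prove the generation statement geometrically: you reduce, via Dyckerhoff's theorem that $k^{\st}$ is a compact generator and the elementary orthogonality argument (only the easy direction of your ``if and only if'' is needed), to showing $k^{\st}\in\mathrm{thick}(N^{\st})$; you then use the tilting equivalence together with the finiteness of the global dimension of $A=\End_R(R\oplus N)$ to get $\D^b(\mathrm{coh}(\hat{X}))=\mathrm{thick}(\cO_{\hat{X}},\cN)$, push forward $\cO_{C_i^{\red}}$ (correctly computing $\bR\hat{f}_*\cO_{C_i^{\red}}\cong k$ since $C_i^{\red}\cong\PP^1$ is contracted to $p$ and $H^1(\PP^1,\cO)=0$), and pass to $\D_{sg}(\hat{Y})\simeq\MF(W)$, where $R$ dies and $N$ becomes $N^{\st}$. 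This is sound, and it makes visible the geometric mechanism that the paper hides inside the Iyama--Wemyss citation; the price is one extra input not stated in the paper, namely that $A$ has finite global dimension (equivalently, is an NCCR), which is indeed proved in \cite{VdB04} but should be cited precisely rather than asserted, since it is exactly what converts ``tilting'' into ``$\D^b(\text{mod-}A)=\mathrm{thick}(A)$''. Your compactness argument ($N$ finitely generated, hence $N^{\st}$ of finite rank, hence compact by \cite{Dyc09}) agrees with the paper's.
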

\begin{proof}
Because $R\oplus N$ is a tilting object, proposition 5.10 of \cite{IW10} implies that $N$ is a generator. By~\cite[Section~4]{Dyc09}, $N^{\st}$ is a compact object.
\end{proof}
In \cite{HT16}, we prove that the contraction algebra is isomorphic with the endomorphism algebra of $N^{\st}$ in $\MF(W)$. Therefore it carries a natural $\ZZ/2$-graded $A_\infty$-structure.
\begin{prop}(\cite[ Proposition 3.2]{HT16})\label{inftycontr}
Let $W,R,N$ be defined as above. We have 
\begin{align}\label{Acon:id}
\Ac\cong \RHom_{\MF(W)}(N^{\st},N^{\st}). 
\end{align}
\end{prop}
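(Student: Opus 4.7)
My plan is to view the right-hand side of~\eqref{Acon:id} as the $\ZZ/2$-graded derived endomorphism complex of $N^{\st}$ in $\MF(W)$, and to split the claim into two parts: (i) $H^0\RHom_{\MF(W)}(N^{\st},N^{\st})\cong\Ac$ as algebras, and (ii) $H^1\RHom_{\MF(W)}(N^{\st},N^{\st})=0$, so that the whole complex is concentrated in even degree. Both parts will rely on the Buchweitz--Orlov equivalence $\CMR\simeq [\MF(W)]$ recalled in Section~2.2, together with the tilting property of Van den Bergh's bundle $\cU=\cO_{\hat{X}}\oplus\cN$.

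For (i), I would first unpack the definition $\Ac = A/I_{\rm con}$. Because $R$ is free over itself, the two-sided ideal $I_{\rm con}$ of endomorphisms of $R\oplus N$ factoring through a summand of some $R^{\oplus k}$ is precisely the ideal of morphisms that become trivial in the stable category $\CMR$ of maximal Cohen--Macaulay $R$-modules. Thus $\Ac$ is the endomorphism ring of $R\oplus N$ in $\CMR$, and since $R$ itself becomes the zero object there, only the $N$ summand contributes: $\Ac\cong\underline{\End}_R(N)$. Van den Bergh's construction (together with $R^{>0}f_{*}\cN=0$ noted after \eqref{extN}) ensures that $N$ is a reflexive MCM $R$-module, so the Buchweitz--Orlov equivalence sends $N$ to its stabilization $N^{\st}$ by definition, giving $\Ac\cong H^0\RHom_{\MF(W)}(N^{\st},N^{\st})$.

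For (ii), the same equivalence combined with the $\ZZ/2$-periodicity of $\MF(W)$ identifies $H^1\RHom_{\MF(W)}(N^{\st},N^{\st})$ with $\underline{\Hom}_R(N,\Omega^{-1}N)$, which for MCM modules over a Gorenstein ring agrees with the ordinary $\Ext^1_R(N,N)$. To see that this $\Ext$ vanishes I would invoke the tilting property of $\cU$: since $\cU$ is a tilting bundle on the smooth threefold $\hat{X}$ with $A=\End_{\hat{X}}(\cU)=\End_R(R\oplus N)$ of finite global dimension, $R\oplus N$ is a rigid (modifying) $R$-module, equivalently $A$ is a non-commutative crepant resolution of $R$ in the sense of Van den Bergh. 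Rigidity yields $\Ext^1_R(R\oplus N,R\oplus N)=0$ and hence $\Ext^1_R(N,N)=0$, proving (ii).

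I expect (ii) to be the main obstacle, because it is the only place where one transfers the tilting condition on the smooth resolution $\hat{X}$ into a rigidity statement for $R\oplus N$ over the singular affine base $\hat{Y}=\Spec R$. The affineness of $\hat{Y}$ and the NCCR structure of $A$ make the transfer possible -- on an affine base, the tilting vanishing $\Ext^{>0}_{\hat{X}}(\cU,\cU)=0$ forces the higher $R^{i}f_{*}\sHom(\cU,\cU)$ to vanish -- but one must be careful to correctly identify the positive-degree stable Ext in $\CMR$ with ordinary $\Ext^1_R$ under the MCM hypothesis on $N$; this is the technical point where the proof closes up.
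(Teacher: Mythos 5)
Your part (i) is exactly the paper's argument: $I_{\rm con}$ is the ideal of maps factoring through projectives, so $\Ac\cong\underline{\End}_R(N)$, and the Buchweitz--Orlov equivalence $\CMR\simeq\MF(W)$ identifies this with $\Hom^0_{\MF(W)}(N^{\st},N^{\st})$. The reduction in (ii) to showing $\Ext^1_R(N,N)=0$ (using that $N$ is MCM and $\MF(W)$ is $\ZZ/2$-periodic) also matches the paper. The gap is in how you prove $\Ext^1_R(N,N)=0$. The tilting vanishing $\Ext^{>0}_{\hat X}(\cU,\cU)=0$, equivalently $R^{>0}f_*\sHom(\cU,\cU)=0$ on the affine base, controls Ext groups computed on the smooth space $\hat X$; it does \emph{not} transfer to Ext groups of $N=f_*\cN$ over the singular ring $R$, because $\RHom_R(N,N)$ is not computed by $\bR f_*\sHom(\cU,\cU)$ (the comparison map involves $\bL f^*N\to\cN$, whose cone is supported on the exceptional curve). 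A sanity check that your transfer cannot work as stated: it would equally give $\Ext^2_R(N,N)=0$, yet by the very $\ZZ/2$-periodicity you invoke, $\Ext^2_R(N,N)\cong\Hom^0_{\MF(W)}(N^{\st},N^{\st})\cong\Ac\neq 0$. Likewise ``$A$ is an NCCR, hence $R\oplus N$ is rigid'' conflates two different notions: the NCCR/modifying condition says $\End_R(R\oplus N)$ is MCM and of finite global dimension, which by itself does not give $\Ext^1$-vanishing.

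What the paper actually uses to close (ii) is a depth argument special to dimension three: since $N=f_*\cN$, the module $\End_R(N)$ is MCM (this is the piece that does follow from Van den Bergh's setup), and $\Ext^1_R(N,N)$ has finite length because $f$ is an isomorphism away from the singular point, so $N$ is locally free on the punctured spectrum; then Lemma 2.7 of \cite{IW10} (a depth-counting lemma for $3$-dimensional CM rings) forces $\Ext^1_R(N,N)=0$. So your outline is structurally right, but the ``technical point where the proof closes up'' that you flagged is exactly the step your argument does not supply: you need the MCM-ness of $\End_R(N)$ plus finite-length support plus the dimension-three depth lemma, not the tilting condition upstairs.
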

\begin{proof}
From the definition of the contraction algebra, $\Ac$ is the ring of endomorphism of $N$ modulo the ideal of elements that factors through a projective $R$-module. This is precisely the definition of endomorphism in the stable category of maximal Cohen-Macaulay modules. By the equivalence $\CMR\cong \MF(W)$
\[
\Ac\cong \Hom^0_{\MF(W)}(N^{\st},N^{\st}).
\]
Because $\MF(W)$ is $\ZZ/2$-graded, it suffices to show that $\Hom^1_{\MF(W)}(N^{\st},N^{\st})$ vanishes. Since $N$ is a maximal Cohen-Macaulay $R$-module, it suffices to prove that $\Ext^1_R(N,N)$ vanishes. Because $N=f_*\cN$, $\Hom^0_R(N,N)$ is a maximal Cohen Macaulay module and $\Ext^1_R(N,N)$ is supported at the singular point. By Lemma 2.7 of \cite{IW10}, $\Ext^1_R(N,N)$ vanishes.
\end{proof}
Note that the $A_\infty$-products $m_k$ on $\Ac$ vanishes for odd $k$.
We may consider the $A_\infty$-enhanced version of the conjecture of Donovan and Wemyss. The proof will be presented in Section \ref{sec:proof}.
\begin{theorem}(\cite[Conjecture 5.3]{HT16})\label{mainthm}
Suppose that $X\to Y$ and $X^\prime\to Y^\prime$ are 3-dimensional flopping contractions of smooth quasi-projective 3-folds $X$ and $X^\prime$, to threefolds $Y$ and $Y^\prime$ with isolated singular points $p$ and $p^\prime$ respectively.    Then the completions of stalks at $p\in Y$ and $p^\prime\in Y^\prime$ are isomorphic if and 
only if $ A_{con}$ and $ B_{con}$ are Morita equivalent as $\ZZ/2$-graded $A_\infty$ algebras.
\end{theorem}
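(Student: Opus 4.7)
The plan is to extract from the pair $(\Ac, \{m_k\})$, up to Morita equivalence of $\ZZ/2$-graded $A_\infty$-algebras, the Milnor algebra $M_W$ together with the distinguished class $[W]\in M_W$; since the Tjurina algebra is $M_W/([W])$, Mather--Yau (Theorem~\ref{MY}) then recovers the formal germ of $Y$ at $p$ from this data. Thus the theorem will follow once this pair is shown to be both determined by, and a complete invariant of, the $A_\infty$-Morita class of $\Ac$.

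The ``only if'' direction is the soft one: a formal isomorphism $\hat{\cO}_{Y,p}\cong \hat{\cO}_{Y',p'}$ gives an equivalence $\MF(W)\simeq \MF(W')$ of matrix factorization categories, and Proposition~\ref{inftycontr} identifies $\Ac$ and $\Bc$ with the $A_\infty$-endomorphism algebras of compact generators of these categories, whence they are Morita equivalent as $\ZZ/2$-graded $A_\infty$-algebras. For the ``if'' direction I would proceed in several steps. First, combine Proposition~\ref{generator} with Theorem~\ref{inftycontr} and Dyckerhoff's theorem to identify $\HH^*(\Ac)$ with the Milnor algebra $M_W$. Next, I would invoke Efimov's computation of the minimal $A_\infty$-model of $\RHom_{\MF(W)}(k^{\st},k^{\st})$, which expresses the higher products in terms of the Taylor coefficients of $W$, showing that the Hochschild class represented by $\{m_k\}_{k\geq 3}$ on that generator corresponds to $[W]\in M_W$ up to a nonzero scalar. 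Third, transfer this identification from the ``point'' generator $k^{\st}$ to the ``tilting'' generator $N^{\st}$ via Morita invariance of the canonical $A_\infty$-class in $\HH^*$. Finally, an $A_\infty$-Morita equivalence $\Ac\sim\Bc$ then induces an isomorphism $(M_W,[W])\cong(M_{W'},[W'])$, and Mather--Yau yields the desired formal isomorphism $\hat{\cO}_{Y,p}\cong \hat{\cO}_{Y',p'}$.

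The crucial step, and the main expected obstacle, is the Morita invariance of the $A_\infty$ class. While $\HH^*$ is classically Morita invariant as a graded algebra, the collection $\{m_k\}_{k\geq 3}$ determines a preferred cohomology class that must also be preserved under the equivalence, and this is exactly the content needed to transport $[W]$ from the generator $k^{\st}$ to the generator $N^{\st}$. I would establish this using the upper-triangular dg-algebra of Happel--Buchweitz--Keller attached to a tilting bimodule realizing the equivalence: its Hochschild cohomology projects onto both $\HH^*(\Ac)$ and $\HH^*(\Bc)$, and one checks that a single canonical class in the middle maps to the $A_\infty$-class on each side, forcing the two target classes to agree under the induced isomorphism. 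Carefully managing the $\ZZ/2$-grading and the higher $A_\infty$-coherences is the main technical content; once this Morita-invariance statement is in hand, the chain ``$A_\infty$-Morita equivalence $\Rightarrow$ $(M_W,[W])\cong(M_{W'},[W'])$ $\Rightarrow$ $\hat{\cO}_{Y,p}\cong \hat{\cO}_{Y',p'}$'' assembles the proof.
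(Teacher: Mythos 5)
Your proposal matches the paper's argument in all essentials: it identifies $HH^\bullet(\Ac)$ with the Milnor algebra via Dyckerhoff, uses Efimov's lemma to identify the class of the $A_\infty$-products on the minimal model of $\RHom_{\MF(W)}(k^{\st},k^{\st})$ with $[W]$, transports this class to $N^{\st}$ and across any $A_\infty$-Morita equivalence via the Happel--Buchweitz--Keller upper-triangular dg-algebra (the paper's Lemma \ref{biCart} together with Proposition \ref{mumap}), and concludes with Mather--Yau. This is exactly the route taken in the paper, so no further comparison is needed.
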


\section{Hochschild cohomology}\label{sec:HH}
In this section, we collect several definitions and results about Hochschild cohomology of graded algebras, and more generally $A_\infty$-algebras. The content of Section 3.1 and 3.2 are well known to experts. We present them here just for our conveniences. Our main references are \cite{Ger62} for Section 3.1, \cite{Keller01}, \cite{Keller03} and \cite{Mer16} for Section 3.2. In Section 3.3, we recall the definitions of polyvector fields and Schouten bracket following \cite{Efi09}. Lemma \ref{lemma-Efi} is due to Efimov (Lemma 8.2 \cite{Efi09}). This is the key result  that allows us to relate $W$ to the $A_\infty$-structure of $\Ac$.
\subsection{Hochschild cohomology of graded algebras}\label{subsec:HHgraded}
Let $A$ be a $\ZZ$-graded or $\ZZ/2$-graded algebra over a field $k$. 
As a convention, we denote an element of $A$ by $a_i$ and its degree by $|a_i|$. Denote a map from $A\otimes\ldots\otimes A\to A$ by $f$ and its degree by $|f|$.

The \emph{Hochschild cochain complex} $CC^\bullet(A,A)$ is defined as
\begin{equation}\label{HHcochain}
CC^d(A,A)=\prod_{i+j=d} \Hom_k(A^{\otimes i},A[j]).
\end{equation}
Here $\Hom_k$ stands for the space of $k$-linear maps of degree zero. And $A[j]$ is the free $A$-module defined by $A[j]_l:=A_{l+j}$. 
The \emph{Hochschild differential} $\partial$ is defined by
\begin{align}\label{HHd_alg}
\partial(f) (a_1,\ldots,a_n)&=-(-1)^{(|a_1|+1)|f|}a_1f(a_2,\ldots,a_n)\notag\\
&-\sum_{i=2}^n(-1)^{\ep_i} f(a_1,\ldots,a_{i-1}a_i,\ldots,a_n)\\
&+(-1)^{\ep_n}f(a_1,\ldots,a_{n-1})a_n,\notag
\end{align}
where $\ep_i=|f|+|a_1|+\ldots+|a_{i-1}|-i+1$. The \emph{Hochschild cohomology} of the graded algebra $A$, denoted by $HH^\bullet(A,A)$ is defined to be 
$H(CC^\bullet(A,A),\partial)$.

 For $f\in CC^n(A,A)$ and $g\in CC^m(A,A)$, the cup product on $CC^\bullet(A,A)$ is defined by
\[
(f\cup g)(a_1,\ldots,a_{m+n})=(-1)^{|g|(\sum_{i\leq m}|a_i|+1)}f(a_1,\ldots,a_n)g(a_{n+1},\ldots,a_{n+m}).
\]
 For $f\in CC^n(A,A)$ and $g\in CC^m(A,A)$ one defines the composition at the i-th place (or the \emph{braces} structure) $f\circ_i g\in CC^{m+n-1}(A,A)$
\begin{align}\label{braces}
f\circ_i g(a_1,\ldots,a_{m+n-1})=(-1)^{(|g|+1)\sum_{j\leq i}(|a_j|+1)}f(a_1,\ldots,a_{i-1},g(a_i,\ldots,a_{m+i-1}),\ldots,a_{m+n-1}).
\end{align}
Define
\begin{align*}
f\circ g :=\sum_{i=1}^n f\circ_i g; & &  \{f,g\}:=f\circ g-(-1)^{(n-1)(m-1)}g\circ f.
\end{align*}

The bracket $\{~,~\}$ induces the \emph{Gerstenhaber bracket} on $HH^\bullet(A,A)$. It was proved in \cite{Ger62} that $\{,\}$ and $\cup$ satisfy the properties:
\begin{enumerate}\label{HHoddPoisson}
\item[(1)] $\{f,g\}=-(-1)^{(|f|-1)(|g|-1)}\{g,f\}$;
\item[(2)] $\{f,g\cup h\}=\{f,g\}\cup h+(-1)^{(|f|-1)|g|}g\cup\{f,h\}$;
\item[(3)] $(-1)^{(|f|-1)(|h|-1)}\{f,\{g,h\}\}+cp(f,g,h)=0$;
\item[(4)] $\cup$ induces a (graded) commutative product on $HH^\bullet(A,A)$.
\end{enumerate}
Here $cp(f,g,h)$ means cyclic permutation of the previous term. The above properties says that the cup product and the Gerstenhaber bracket make $HH^\bullet(A,A)$ a Poisson algebra of degree $-1$. This can be viewed as an analogue of the odd Poisson structure on the space of polyvector fields on a smooth manifold.

Observe that the definition of the braces structure does not involve the algebra structure on $A$ while the differential $\partial$ and the cup product do.
The product $\mu:A\otimes A\to A$ can be viewed as an element in $CC^2(A,A)$. By formulae \ref{HHd_alg} and \ref{braces}, it is easy to check that 
\[
\partial(f)=(-1)^{|f|-1}\{\mu,f\}.
\]
The associativity of $\mu$ is equivalent with the equation $\{\mu,\mu\}=0$.

Denote $A^e$ for $A\otimes A^{\rm{op}}$, where $A^{\rm{op}}$ is the opposite algebra of $A$ defined by
\[
\mu^{\rm{op}}(a,b)=(-1)^{|a||b|}\mu(b,a).
\]
Given an $A^e$-module $M$, we may define the Hochschild cochain complex with value in $M$
\[
CC^d(A,M)=\prod_{i+j=d} \Hom_k(A^{\otimes i},M[j]).
\]
The formula of the differential is  the same with \ref{HHd_alg}, except in the first and last term the multiplications are replaced by the actions. Denote the bar resolution of $A$ as $A^e$-modules
by $\BB(A)$.
Because $\Hom_{A^e}(A^{\otimes n+2},M)\cong \Hom_k(A^{\otimes n},M)$, $CC^\bullet(A,M)$ is isomorphic to $\Hom_{A^e}(\BB(A),M)$ as complexes of $k$-modules.

Let $A$ and $B$ be two graded $k$-algebras and $X$ be a $A\otimes B^{\rm{op}}$-module. Let $C^\bullet(A,X,B)$ be a graded vector space with
\[
C^d(A,X,B):=\prod_{j+l+m=d-1} \Hom_k(A^{\otimes l}\otimes X\otimes B^{\otimes m},X[j]).
\]
The differential on $C^\bullet(A,X,B)$ is defined by a similar formula like \ref{HHd_alg} with the product replaced by action when an element of $X$ is involved.

\subsection{Hochschild cohomology of $A_\infty$-algebra}\label{subsec:HHAinfty}
The content of this subsection is essentially due to Keller.
In \cite{Keller03}, Keller proved that the Hochschild cohomology of dg-category is invariant under Morita equivalence. In this subsection, we apply Keller's construction to the case when the Morita equivalence is given by choosing two different generators of the dg-category. Because we need to keep track of certain class in the Hochschild cohomology under this equivalence in our application, it is more convenient to restate Keller's construction in the language of $A_\infty$-algebras.  

Let $A$ be a $\ZZ$-graded or $\ZZ/2$-graded $A_\infty$-algebra over $k$, with $A_\infty$-structures
\[
m_k: A^{\otimes k}\to A[2-k]~~\text{for}~~k\geq 1.
\]
For $n\geq 1$, we have
\begin{align}\label{Ainftyrel}
\sum_{r+s+t=n}(-1)^{r+st}m_u(a_1,\ldots,a_r,m_s(a_{r+1},\ldots,a_{r+s}),a_{r+s+1},\ldots,a_n)=0
\end{align}
where we put $u=r+1+t$.

The element $m:=\sum_{k\geq 1}m_k$ defines an element in $CC^2(A,A)$. The following result follows from a straightforward calculation.
\begin{prop}
The equation $\{m,m\}=0$ is equivalent with the relations \ref{Ainftyrel}.
\end{prop}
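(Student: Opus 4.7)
The plan is to verify both directions by expanding $\{m,m\}$ into homogeneous components indexed by arity and matching these term-by-term with the $A_\infty$-relations. First, observe that each $m_k$ has Hochschild bidegree $(k,2-k)$ and hence total cochain degree $|m_k|=2$, so $m=\sum_{k\geq 1}m_k$ is a well-defined element of $CC^2(A,A)$ and $\{m,m\}$ is defined. By bilinearity of the brace operations,
$$\{m,m\} = \sum_{k,l\geq 1}\{m_k,m_l\} = \sum_{k,l\geq 1}\bigl(m_k\circ m_l + m_l\circ m_k\bigr) = 2\sum_{k,l\geq 1} m_k\circ m_l,$$
where the antisymmetry sign $(-1)^{(|m_k|-1)(|m_l|-1)}=-1$ turns the bracket into a sum and the final equality uses the relabeling $(k,l)\leftrightarrow(l,k)$.

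Next, I would expand $m_k\circ m_l=\sum_{i=1}^{k} m_k\circ_i m_l$ and reparametrize. Setting $r=i-1$, $s=l$, $t=k-i$ yields $r+s+t=k+l-1$ and $u:=r+1+t=k$, so each $m_k\circ_i m_l$, evaluated on $(a_1,\ldots,a_{k+l-1})$, is the summand
$$m_u\bigl(a_1,\ldots,a_r, m_s(a_{r+1},\ldots,a_{r+s}), a_{r+s+1},\ldots,a_n\bigr)$$
of the arity-$n$ $A_\infty$-relation with $n=k+l-1$. The map $(k,l,i)\mapsto(r,s,t)$ is a bijection from $\{(k,l,i):k,l\geq 1,\,1\leq i\leq k,\,k+l-1=n\}$ onto $\{(r,s,t):r,t\geq 0,\,s\geq 1,\,r+s+t=n\}$, so the arity-$n$ component of $\tfrac{1}{2}\{m,m\}$ reproduces the full sum in the $n$-th $A_\infty$-relation. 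Hence the two conditions are equivalent, in each arity separately.

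The only piece of bookkeeping is sign reconciliation. The braces formula carries the Koszul sign $(-1)^{(|m_l|+1)\sum_{j\leq i}(|a_j|+1)}$ depending on the input degrees, whereas the stated $A_\infty$-relation uses the shifted sign $(-1)^{r+st}$ independent of the $|a_j|$. These agree after passing to the suspension $sA$, replacing each $|a_j|$ by $|a_j|-1$, upon which the variable Koszul contributions collapse and leave exactly $(-1)^{r+st}$. This sign matching is the only technical point and is entirely mechanical once the conventions are fixed, which is why the paper labels the result a straightforward calculation.
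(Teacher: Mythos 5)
Your calculation is correct and is exactly the ``straightforward calculation'' that the paper invokes without writing out: the degree count $|m_k|=2$, the antisymmetry of the bracket giving $\{m,m\}=2\,m\circ m$, and the bijection $(k,l,i)\leftrightarrow(r,s,t)$ identifying the arity-$n$ component of $m\circ m$ with the $n$-th relation \ref{Ainftyrel} constitute the whole content, so your route coincides with the paper's. The one caveat is your final sign remark: the element-dependent brace signs do not simply ``collapse'' on the suspension but are absorbed through the standard transfer $b_k=\pm\, s\circ m_k\circ (s^{-1})^{\otimes k}$ between shifted and unshifted conventions (the paper's $(-1)^{r+st}$ being the map-level form with Koszul signs left implicit); you correctly locate the issue, and the paper is no more precise on this point, so this matches its level of rigor.
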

We define the differential on $CC^\bullet(A,A)$ by 
\begin{align}\label{d-Ainfty}
\partial(f)=(-1)^{|f|-1}\{m,f\}.
\end{align}
Similar to the graded algebra case, we define $HH^\bullet(A,A)$ to be $H(CC^\bullet(A,A),\partial)$.
Clearly, $m$ represents a canonical cohomology class in $HH^2(A,A)$. Given an $A_\infty$-algebra $A$ with $m\in CC^2(A,A)$, an element $m^\prime\in CC^2(A,A)$ is said to satisfy the \emph{Maurer-Cartan} equation if
\[
\{m+m^\prime,m+m^\prime\}=0.
\]
It is easy to check that $m+m^\prime$ defines an $A_\infty$-structure if and only if the above equation holds. We call the new differential $\partial_{m^\prime}$ defined by
\[
\partial_{m^\prime}(f):=\partial f+\{m^\prime,f\}
\] \emph{the twisted differential} defined by $m^\prime$.

An $A_\infty$-module over $A$ is a graded space $M$ endowed with maps:
\[
m_k^M:A^{\otimes k-1}\otimes M\to M[2-k], ~~k\geq 1
\]
such that an identity of the form \ref{Ainftyrel} holds, but with $m_u(\ldots,a_r,m_s,a_{r+s+1}\ldots)$ replaced by $m^M_u(\ldots,a_r,m_s,a_{r+s+1}\ldots,a_{r+s+t})$ for $t>0$ and 
$m^M_u(\ldots,m^M_s)$ for $t=0$.

Let $A$ and $B$ be $A_\infty$-algebras. An $A_\infty$-module over $A\otimes B^{\rm{op}}$ (or an $A$-$B$ $A_\infty$-bimodule) $X$ is a graded space endowed with maps
\[
m_{l,m}^X: A^{\otimes l}\otimes X\otimes B^{\otimes m}\to X,
\]
such that for $n\geq 1$, $m^X_{n,0}$ and $m^X_{0,n}$ satisfies the relations of $A_\infty$-modules over $A$ and $B^{\rm{op}}$ respectively, and
\begin{align*}
&\sum_{\substack{l+m+s=n-1\\ r+t=s}}\pm m^X_{l,m}(a_1,\ldots,a_l,m^X_{r,t}(a_{l+1},\ldots,a_{l+r},x,a_{l+r+1}\ldots,a_{l+r+t}),a_{l+s+1},\ldots,a_{n-1})\\
&+\sum_{\substack{r+s+t+m=n-1\\ r+t+1=l}}\pm m^X_{l,m}(a_1,\ldots,a_r,m^A_s(a_{r+1},\ldots,a_{r+s}),a_{r+s+1},\ldots,a_{r+s+t},x,\ldots,a_{n-1})\\
&+\sum_{\substack{r+s+t+l=n-1\\ r+t+1=m}}\pm m^X_{l,m}(a_1,\ldots,a_l,x,a_{l+1},\ldots,a_{l+r},m^B_s(a_{l+r+1},\ldots,a_{l+r+s}),a_{l+r+s+1},\ldots,a_{n-1})\\ &=0
\end{align*}
for $l,m\neq 0$. The choice of the sign is rather subtle. Since we will not use it in this paper, we refer to Section 1 of \cite{Mer16} for the details about the sign.

Given $A_\infty$-algebras $A$, $B$ and a $A\otimes B^{\rm{op}}$-module $X$, we can construct a new $A_\infty$-algebra $G$ consisting of upper-triangular matrices
\[g:=\left(\begin{array}{cc} a&x\\0&b\end{array}\right)\]
for $a\in A$, $b\in B$ and $x\in X$. The $A_\infty$-structure on $G$ is defined by
\[
m_k^G=\left(\begin{array}{cc} m_k^A&\sum_{l+m=k-1} m^X_{l,m}\\0&m_k^B\end{array}\right).
\]
Again, we denote $m^G$ for $\sum_k m^G_k$, $m^A$ for $\sum_k m^A_k$ and $m^B$ for $\sum_k m^B_k$.

There are canonical inclusions of complexes of $G^e$-modules 
\[
\mu_A: A\otimes_A \BB(A)\otimes_A (A\oplus X)\to \BB(G)
\] and 
\[
\mu_B: (B\oplus X)\otimes_B \BB(B)\otimes_B B\to \BB(G) 
\] defined as follows.
For $n\geq 0$, we have
\[
A\otimes_A A^{\otimes (n+2)}\otimes_A(A\oplus X)\cong A\otimes_k A^{\otimes n}\otimes_k (A\oplus X).
\]
The right hand side embeds into $G^{\otimes (n+2)}$ as a summand. This defines the inclusion $\mu_A$, and similarly $\mu_B$ when $A$ is replaced by $B$. 

Observe that 
\[\Hom_{G^e}(A\otimes_A \BB(A)\otimes_A (A\oplus X), G)\cong \Hom_{A^e}(\BB(A),A)\]
by adjunction. As a consequence, $\mu_A$ induces a chain map
\[
\mu_A^*:CC^\bullet(G,G)=\Hom_{G^e}(\BB(G),G)\to CC^\bullet(A,A)=\Hom_{A^e}(\BB(A),A).
\]
Similarly, $\mu_B$ induces a map $\mu^*_B: CC^\bullet(G,G)\to CC^\bullet(B,B)$.

The following proposition follows immediately from the definition of $\mu_A$ and $\mu_B$.
\begin{prop}\label{mumap}
\[
\mu^*_A(m^G)=m^A,~~\mu^*_B(m^G)=m^B.
\]
\end{prop}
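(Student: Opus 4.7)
The plan is essentially a bookkeeping computation on generators of the bar resolution, combined with the block-matrix description of $m^G_k$. The structural facts I will invoke are: $A\oplus X\subset G$ is a two-sided ideal (the top row of the upper-triangular matrices is closed under left and right multiplication by $G$), giving $G/(A\oplus X)\cong B$; symmetrically, $B\oplus X\subset G$ is a two-sided ideal with quotient isomorphic to $A$. These equip $A$ and $B$ with natural $G$-bimodule structures, which are the ones implicitly used in the definitions of $\mu_A$ and $\mu_B$.

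First, I would make the adjunction $\Hom_{G^e}(A\otimes_A \BB(A)\otimes_A (A\oplus X),G)\cong \Hom_{A^e}(\BB(A),A)$ explicit. Writing $e_A\in G$ for the idempotent projecting onto the $A$-block, $G^e$-linearity together with the identity $1_A=e_A\cdot 1_G\cdot e_A$ forces any $\phi$ in the left-hand Hom space to satisfy $\phi(1\otimes a_1\otimes\cdots\otimes a_n\otimes 1)=e_A\,\phi(\cdot)\,e_A\in e_A G e_A=A$. This identifies $\mu_A^\ast$ concretely with the restriction map to $\Hom_{A^e}(\BB(A),A)=CC^\bullet(A,A)$. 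Second, I would evaluate $\mu_A^\ast(m^G)$ on a generator $1\otimes a_1\otimes\cdots\otimes a_n\otimes 1$ of $\BB(A)$: by construction of $\mu_A$ its image in $\BB(G)$ is the same tensor word, now with each $a_i$ viewed in $A\subset G$. The block formula $m^G_k=\bigl(\begin{smallmatrix}m^A_k & \sum m^X_{l,m}\\ 0 & m^B_k\end{smallmatrix}\bigr)$ then shows $m^G_n(a_1,\ldots,a_n)=m^A_n(a_1,\ldots,a_n)$, because the off-diagonal components $m^X_{l,m}$ require at least one $X$-entry and therefore vanish on pure $A$-inputs. Summing over $n$ yields $\mu_A^\ast(m^G)=m^A$. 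The argument for $\mu_B^\ast$ is completely symmetric, embedding $B$-inputs via $(B\oplus X)\otimes_B\BB(B)\otimes_B B$ and using the vanishing of $m^X_{l,m}$ on pure $B$-inputs.

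There is no substantive obstacle: the proposition is a tautology expressing how the $A$- and $B$-subalgebras of $G$ sit naturally inside its bar resolution. The only mild care needed is tracking bimodule structures through the adjunction and checking that the $A_\infty$-bimodule components $m^X_{l,m}$ contribute nothing when only $A$-entries (respectively $B$-entries) are fed in. Accordingly, I expect the published proof to amount to little more than these two observations, and the real content is downstream: this proposition is what allows one to extract the canonical Hochschild classes of $A$ and $B$ from the single class of $G$, which is precisely what is needed later to transport the $A_\infty$-structure class of $\Ac$ under Morita equivalence.
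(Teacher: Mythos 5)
Your proof is correct and is essentially the paper's argument: the paper offers no written proof beyond the remark that the proposition ``follows immediately from the definition of $\mu_A$ and $\mu_B$,'' and your unwinding (identifying $\mu_A^*$ as restriction via the adjunction/idempotent, then noting that the block form of $m^G_k$ reduces to $m^A_k$ on pure $A$-inputs since every $m^X_{l,m}$ needs an $X$-entry) is precisely the bookkeeping that remark suppresses. No discrepancy to report.
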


Let $A$ and $B$ be two $A_\infty$-algebras and $X$ be a $A\otimes B^{\rm{op}}$-module. Similar to Section \ref{subsec:HHgraded}, we define the complex $C^\bullet(A,X,B)$ with
\[
C^d(A,X,B):=\prod_{j+l+m=d-1} \Hom_k(A^{\otimes l}\otimes X\otimes B^{\otimes m},X[j]).
\]
and the Hochschild differential \ref{d-Ainfty} but with $m^A$ and $m^B$ replaced by $m^X$ when the inputs contains elements of $X$. 

We define a morphism of complexes
\[
\alpha: CC^\bullet(A,A)\to C^\bullet(A,X,B)
\] by sending 
\[
f\in \Hom_k(A^{\otimes i},A[d-i])
\] to a map  $\alpha(f)\in\prod_{l}\Hom_k(A^{\otimes l}\otimes X,X[d-l])$ with
\[
\alpha(f)=\sum_{r+i+t=l} m^X_{l-i+2}(a_1,\ldots,a_r,f(a_{r+1},\ldots,a_{r+i}),a_{r+i+1},\ldots,a_{r+i+t},x).
\]
Similarly, we define a morphism
\[
\beta: CC^\bullet(B,B)\to C^\bullet(A,X,B).
\]
The above maps $\alpha,\beta$ are special cases of a more general construction of Keller (see Section 4.4 \cite{Keller03}).
\subsection{Hochschild cohomology of category of matrix factorizations}\label{subsec:HHmf}
In this subsection, we recall an important lemma of Efimov (Lemma 8.2 \cite{Efi09}), which compares the $L_\infty$-structures on polyvector fields and the $L_\infty$-structures on Hochschild cochain complex of exterior algebras. We will follow Section 3 and 8 of \cite{Efi09}.

Let $V$ be a finite dimensional $k$-vector space with $k=\CC$ and $W$ be an element in $\Sym^{\geq 2} (V^\vee)$. We will further assume that $0$ is an isolated critical point of $W$. We now construct the matrix factorization of the structure sheaf of origin. 

Decompose $W$ into graded components 
\[
W=\sum_{i\geq 2} W_i,~~W_i\in\Sym^i (V^\vee).
\]
Define an one-form 
\[
\omega=\sum_{i\geq 2} \frac{dW_i}{i}.
\]
Denote the basis of $V$ by $\{\xi_k\}$ and the dual basis on $V^\vee$ by $\{z_k\}$. We may identify $\xi_k$ with the constant vector field on $V$. Denote the Euler vector field $\sum_k z_k\xi_k$ on $V$ by $\eta$. It is easy to check that $\iota_\eta \omega=W$, where $\iota$ is the contraction by vector field. Now the $\ZZ/2$-graded vector space $\Sym (V^\vee)\otimes \Lambda (V^\vee)$ with the odd operator $\delta=\iota_\eta+\omega\wedge~$ defines a matrix factorization for the structure sheaf of the origin. We denote its corresponding object in $\MF(W)$ by $k^{\st}$.

The endomorphism space $B_W:=\Hom_{\MF(W)}(k^{\st},k^{\st})$ is endowed with a structure of $\ZZ/2$-graded dg-algebra. If we take $W=0$, then $\delta$ defines the Koszul resolution of the structure sheaf of origin in $V$, and $B:=B_0$ is quasi-isomorphic to the exterior algebra $\Lambda(V)$. The dg-algebra $B_W$ is a deformation of the dg-algebra $B$. Therefore, the minimal model of $B_W$ is an $A_\infty$-deformation of the exterior algebra $\Lambda(V)$.

Define $\ZZ/2$-graded dg-Lie algebras $\fg$ and $\fh$ by the formulae: 
\[
\fg^d=\Sym(V^\vee)\otimes\Lambda^d(V),
\] and
\[
\fh^d=\prod_{i+j=d}\Hom(\Lambda(V)^{\otimes i},\Lambda(V)[j]). 
\] 
Here $\fg$ is equipped with the zero differential and the Schouten bracket (defined below). And $\fh$ is taken to be the Hochschild cochain complex of the $\ZZ/2$-graded algebra $\Lambda(V)$.
\begin{definition}(c.f \cite[3.2]{Efi09})
Given $f\xi_{i_1}\wedge\ldots,\wedge\xi_{i_k}\in \fg^{k}$, $g\xi_{j_1}\wedge\ldots\wedge\xi_{j_l}\in\fg^l$, 
the Schouten bracket, denoted by $\{~,~\}_{sc}$, on $\fg$ is defined to be
\begin{align*}
&\{f\xi_{i_1}\wedge\ldots,\wedge\xi_{i_k},g\xi_{j_1}\wedge\ldots\wedge\xi_{j_l}\}_{sc}=\\
&\sum_{q=1}^k(-1)^{k-q}(f\partial_{i_q}g)\xi_{i_1}\wedge\ldots\wedge
\widehat{\xi_{i_q}}\wedge\ldots\wedge\xi_{i_k}\wedge\xi_{j_1}\wedge\\
&\ldots\wedge\xi_{j_l}+\sum_{p=1}^l(-1)^{l-p-1+(k-1)(l-1)}(g\partial_{j_p}f)\xi_{j_1}\wedge\ldots\wedge
\widehat{\xi_{j_p}}\wedge
\ldots\wedge\xi_{j_l}\wedge\xi_{i_1}\wedge\ldots\wedge\xi_{i_k}.
\end{align*}
\end{definition}

We recall an important lemma of Efimov.
\begin{lemma}(\cite{Efi09}{ Lemma 8.2})\label{lemma-Efi}
\footnote{Here we use the standard grading on Hochschild cochain which differs with the grading of Efimov by 1. Efimov used the pro-nilpotent version of $\fg$ and $\fh$ because he needed to define a gauge group action on the MC locus.}
There exists a $L_\infty$-quasi-isomorphism between $\fg$ and $\fh$. Under such a quasi-isomorphism, the class $W\in \fg$ corresponds to the  class $m\in \fh$, which is the $A_\infty$-structure on the minimal model of $B_W$.
\end{lemma}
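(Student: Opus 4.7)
The plan is to invoke a graded version of Kontsevich's formality theorem and then track the image of the superpotential as a Maurer-Cartan element. Geometrically, $\Lambda(V)$ is the algebra of functions on the shifted affine scheme $X=\Spec\Lambda(V)$, and $\fg=\Sym(V^\vee)\otimes\Lambda(V)$ is naturally identified with polyvector fields on $X$ equipped with the Schouten bracket. Under this interpretation, the assertion $\fg\simeq\fh$ as $L_\infty$-algebras is Kontsevich formality for the graded manifold $X$, and the correspondence $W\leftrightarrow m$ is the general compatibility of formality with Maurer-Cartan deformation theory applied to the particular MC element $W$.

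First, construct an explicit chain-level HKR quasi-isomorphism $\Phi_1\colon\fg\to\fh$: send a monomial $f\otimes\xi_{i_1}\wedge\cdots\wedge\xi_{i_k}$ with $f\in\Sym(V^\vee)$ to the antisymmetrized Hochschild cochain built from left multiplication by $\xi_{i_1}\wedge\cdots\wedge\xi_{i_k}$ combined with the contraction action of $f$ on $\Lambda(V)$ via the derivations $\iota_{z_j}$. A direct check shows $\Phi_1$ is a quasi-isomorphism realising the standard algebra isomorphism $\fg\cong HH^\bullet(\Lambda(V),\Lambda(V))$. Next, upgrade $\Phi_1$ to an $L_\infty$-quasi-isomorphism $\Phi=(\Phi_n)_{n\geq 1}$ by adapting Kontsevich's graph-weight construction to the $\ZZ/2$-graded setting; alternatively, deduce the needed formality abstractly by combining Kontsevich formality for $\Sym(V^\vee)$ with a Koszul-duality identification of the Hochschild cochain complexes of $\Sym(V^\vee)$ and $\Lambda(V)$ as $L_\infty$-algebras. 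Finally, since $W$ has no $\Lambda$-factors, the Schouten formula gives $\{W,W\}_{sc}=0$, so $W$ is a Maurer-Cartan element of $\fg$; then $\Phi$ transports it to the MC element $\Phi_\ast(W)=\sum_{n\geq 1}\tfrac{1}{n!}\Phi_n(W,\ldots,W)\in\fh$, an $A_\infty$-deformation of the $\Lambda(V)$-product.

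The main step, and the main obstacle, is to identify $\Phi_\ast(W)$ with $m-m^\Lambda$, where $m^\Lambda$ is the native product on $\Lambda(V)$. Compute the minimal model of $B_W$ directly via the homological perturbation lemma: view $B_W=(\Sym(V^\vee)\otimes\Lambda(V^\vee),\,\iota_\eta+\omega\wedge)$ as the perturbation by $\omega\wedge$ of the Koszul resolution $(\Sym(V^\vee)\otimes\Lambda(V^\vee),\iota_\eta)$ of the residue field; perturbation then yields closed formulas for the higher products $m_k$ on the cohomology $\Lambda(V)$ as explicit Taylor expansions in $W$. These must be matched with the Taylor coefficients $\Phi_n(W,\ldots,W)$ of the formality morphism. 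Since $\Phi$ is not canonical, one either picks a concrete graph-weight realisation making the match term-by-term, or argues by uniqueness: both MC elements are deformations of $\Lambda(V)$ whose linear-in-$W$ part is the HKR cochain $\Phi_1(W)$, and the obstruction theory for MC deformations of $\Lambda(V)$ --- encoded in the higher Hochschild groups already captured by the HKR isomorphism --- forces them to be gauge-equivalent. This final identification is the technical heart of Efimov's argument.
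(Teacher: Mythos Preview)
Your overall strategy is sound and aligns with the paper's sketch: both invoke Kontsevich formality to produce the $L_\infty$-quasi-isomorphism $\fg\simeq\fh$, and both recognize that the crux is identifying the Maurer--Cartan element transported from $W$ with the $A_\infty$-structure $m$ on the minimal model of $B_W$. The difference lies in how that identification is carried out.

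The paper (in the remark following the lemma, summarizing Efimov) proceeds through Koszul duality for curved $A_\infty$-algebras. One views $(k[[V^\vee]],W)$ as a curved deformation of $\Sym(V^\vee)$, so that $\MF(W)$ is its module category and $B_W$ its Koszul dual. The Koszul bimodule $X=\Lambda(V^\vee)\otimes k[[V^\vee]]$ gives a Morita equivalence, and Proposition~\ref{mumap} applied to the associated upper-triangular algebra shows that $\mu_A^*$ and $\mu_B^*$ carry the global structure $m^G$ to $m^A$ and $m^B$. Combined with Kontsevich formality on the $\Sym(V^\vee)$ side, where $W$ is literally a polyvector field, this transports $W$ to $m$ automatically: the MC-tracking is built into the Morita machinery, and no separate comparison is needed.

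Your route instead attempts the identification directly, either by computing the minimal model of $B_W$ via homological perturbation and matching Taylor coefficients with $\Phi_\ast(W)$, or by a uniqueness argument. The first is legitimate but you do not carry it out. The second, as stated, has a gap: two MC elements sharing the same linear part are not in general gauge equivalent, and the obstruction-theoretic heuristic you invoke does not by itself force this. It is true that an $L_\infty$-quasi-isomorphism induces a bijection on gauge classes of MC elements, but invoking that already presupposes knowing which class $m-m^\Lambda$ lies in, which is the point at issue. The paper's Koszul-duality route via Proposition~\ref{mumap} avoids this circularity entirely, and that is what it buys.
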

\begin{remark}
The above lemma is essentially a consequence of the Koszul duality for curved $A_\infty$-algebras. The classical Koszul duality is a Morita equivalence between $\Sym(V^\vee)$ and $\Lambda(V)$ induced by the bimodule $\Lambda(V^\vee)\otimes\Sym(V^\vee)$. Consider a (curved-)$A_\infty$ deformation of $\Sym(V^\vee)$ (strictly speaking its completion $k[[V^\vee]]$) defined by $W$. The dg-category of matrix factorizations $\MF(W)$ can be defined alternatively as the dg-category of the curved $A_\infty$-algebra $(k[[V^\vee]],W)$ (see \cite{PP12}). And $B_W$ is its Koszul dual. Then the above lemma follows from Kontsevich formality and Proposition \ref{mumap} with $X$ being the Koszul $A_\infty$-bimodule $\Lambda(V^\vee)\otimes k[[V^\vee]]$. We refer to Section 8 of \cite{Efi09} for the details of the proof.
\end{remark}

For degree reasons, $\{W,W\}_{sc}=0$ for any $W\in \Sym(V^\vee)$. Given $\theta=g\xi_{j_1}\wedge\ldots\wedge\xi_{j_l}$,
\[
\{W,\theta\}_{sc}=\sum_{p=1}^l(-1)^p(g\partial_{j_p}W)\xi_{j_1}\wedge\ldots\wedge
\widehat{\xi_{j_p}}\wedge
\ldots\wedge\xi_{j_l}.
\]
\begin{remark}
Define the differential on $\fg$ by
\[
\partial:=\{W,~\}_{sc}.
\]
Then $H(\fg,\partial)$ is isomorphic to the Milnor algebra $\Sym(V^\vee)/(\partial_i W)$. Moreover, $W$ represents a zero cohomology class in $H(\fg,\partial)$ if and only if $W$ is quasi-homogeneous.

The above statement can be proved as follows. 
Suppose the dimension of $V$ is equal to $n$.
By choosing a nowhere vanishing holomorphic $n$-form on $V$, we may identify $\Lambda^i(V)$ with $\Lambda^{n-i}(V^\vee)$. Then $\fg$ can be identified, using the volume form, with the algebraic de Rham complex $\Omega_V^\bullet$ with differential $dW\wedge~$. The first part of the claim then follows from the fact that $W$ has only isolated critical points.
The class of $W$ is zero if and only if there exists a vector field $\gamma$ on $V$ such that $\{W,\gamma\}_{sc}=W$. The one parameter subgroup corresponds to $\gamma$ defines the desired $\CC^*$-action that makes $W$ quasi-homogeneous.
\end{remark}

The following corollary follows from Lemma \ref{lemma-Efi} after passing to cohomology groups of $\fg$ and $\fh$.
\begin{corollary}\label{m=W}
Let $m\in \fh$ be the $A_\infty$-structure on the minimal model of $B_W$. Denote by $d:=\{m,~\}$  the twisted differential defined by the Maurer-Cartan element $m\in\fh$. Here $\{~,~\}$ is the Gerstenhaber bracket. Then under the isomorphism $H(\fg,\partial)\cong H(\fh,d)$ constructed in Lemma \ref{lemma-Efi}, the class of $W$ corresponds to the class of $m$.
\end{corollary}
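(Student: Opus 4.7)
The plan is to invoke Lemma \ref{lemma-Efi} and translate its Maurer--Cartan level correspondence into a statement on the twisted cohomologies via the standard twisting construction for $L_\infty$-morphisms.

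By Lemma \ref{lemma-Efi} there is an $L_\infty$-quasi-isomorphism $\phi:\fg\to\fh$ such that $\phi_*(W):=\sum_{n\geq 1}\tfrac{1}{n!}\phi_n(W^{\otimes n})=m$ as Maurer--Cartan elements of $\fh$. Because $\fg$ has zero differential and $\phi_*(W)=m$, the twist
\[
\phi^W_k(\theta_1,\ldots,\theta_k):=\sum_{j\geq 0}\frac{1}{j!}\,\phi_{k+j}(W^{\otimes j},\theta_1,\ldots,\theta_k)
\]
defines an $L_\infty$-morphism between the twisted $L_\infty$-algebras $(\fg,\ell^W)$ and $(\fh,\mu^m)$. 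In particular its linear part $\phi^W_1:(\fg,d_W=\{W,-\}_{sc})\to(\fh,d_m=\{m,-\})$ is a chain map; since it is a formal perturbation of the quasi-isomorphism $\phi_1$ and the filtration by powers of $W$ (respectively $m$) is complete, $\phi^W_1$ remains a quasi-isomorphism. This supplies the isomorphism $H(\fg,\partial)\cong H(\fh,d)$ appearing in the statement.

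Next, $W$ is a cocycle in $(\fg,d_W)$ because $\{W,W\}_{sc}=0$ (the Schouten bracket vanishes on $\fg^0\otimes\fg^0$ for degree reasons) and $m$ is a cocycle in $(\fh,d_m)$ because $\{m,m\}=0$ is the Maurer--Cartan equation. It therefore remains to show that $[\phi^W_1(W)]=[m]$ in $H(\fh,d_m)$.

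This final cohomological identity is the main technical obstacle. A direct computation gives
\[
\phi^W_1(W)=\sum_{n\geq 1}\frac{1}{(n-1)!}\,\phi_n(W^{\otimes n}),
\]
which differs from $m=\sum_{n\geq 1}\tfrac{1}{n!}\phi_n(W^{\otimes n})$ by the nonzero cochain $\sum_{n\geq 2}\tfrac{n-1}{n!}\phi_n(W^{\otimes n})$, so the two elements are not equal on the nose. To produce a $d_m$-primitive I would exploit the one-parameter family of Maurer--Cartan elements $tW\in\fg$ (well defined for every $t$ because $\{W,W\}_{sc}=0$) and its image $m(t):=\phi_*(tW)\in\fh$. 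Differentiating $\{m(t),m(t)\}=0$ shows that $m'(t)=\phi^{tW}_1(W)$ is a $d_{m(t)}$-cocycle for every $t$, and a careful unpacking of the $L_\infty$-morphism relations evaluated at $(W,\ldots,W)$, combined with the vanishing of $\{W,W\}_{sc}$, should assemble the higher Taylor coefficients $\phi^W_k$ into an explicit primitive $\eta\in\fh$ with $d_m\eta=\phi^W_1(W)-m$. Controlling this combinatorial identity is where the bulk of the work lies.
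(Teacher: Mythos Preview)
The paper's own ``proof'' of this corollary is a single sentence: it asserts that the statement follows from Lemma~\ref{lemma-Efi} ``after passing to cohomology groups of $\fg$ and $\fh$''. No argument is given, and in particular the paper does not address the point you raise.

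You have engaged with the statement far more seriously than the paper does, and you have put your finger on a genuine subtlety: for a general $L_\infty$-quasi-isomorphism $\phi$, the twisted linear part $\phi^W_1$ sends $W$ to $\sum_{n\geq 1}\tfrac{1}{(n-1)!}\phi_n(W^{\otimes n})$, which differs from $m=\phi_*(W)=\sum_{n\geq 1}\tfrac{1}{n!}\phi_n(W^{\otimes n})$ by a nontrivial cochain whenever $\phi$ has higher components. So one really does need to produce a $d_m$-primitive for the difference, and the paper's one-line justification glosses over exactly this.

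That said, your proposal is explicitly \emph{incomplete}: you outline the scaling-family strategy $t\mapsto tW$ and then say that ``controlling this combinatorial identity is where the bulk of the work lies''. As written, this is a plan rather than a proof. Two possible ways to close the gap:
\begin{itemize}
\item[(i)] Finish the combinatorics directly. Using the $L_\infty$-morphism relations evaluated on tuples of $W$'s (where $d_{\fg}W=0$ and $\{W,W\}_{sc}=0$ kill all source-side terms), one obtains recursive identities among the $d'\phi_n(W^{\otimes n})$ and the brackets $[\phi_k(W^{\otimes k}),\phi_l(W^{\otimes l})]$; an ansatz $\eta=\sum_{n\geq 2}c_n\,\phi_n(W^{\otimes n})$ can then be solved for $c_n$ so that $d_m\eta=\phi^W_1(W)-m$.
\item[(ii)] Avoid the issue by using the \emph{specific} quasi-isomorphism indicated in the remark following Lemma~\ref{lemma-Efi}. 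There the comparison is built from Proposition~\ref{mumap}, whose maps $\mu_A^*,\mu_B^*$ are strict chain maps sending the canonical class $m^G$ to $m^A$, $m^B$ on the nose; along that leg the correspondence $[W]\leftrightarrow[m]$ is immediate, and only the formality leg carries higher components.
\end{itemize}
Either route would complete your argument; as it stands, the proposal identifies the right obstacle but does not remove it.
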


\section{Proof of the main theorem}\label{sec:proof}

Let $x_1,\ldots,x_n$ be the coordinate of $\CC^n$. Denote the stalk of the sheaf of holomorphic functions $\cO_{\CC^n,0}$ by $\CC\{x_1,\ldots,x_n\}$.
\begin{definition}(\cite[ Definition 2.9]{GLS06})
Let $f,g\in \CC\{x_1,\ldots,x_n\}$. $f$ is called \emph{contact equivalent} to $g$, $f\sim^c g$, if there exists an automorphism $\phi$ of $\CC\{x_1,\ldots,x_n\}$ and a unit $u\in \CC\{x_1,\ldots,x_n\}^*$ such that $f=u\cdot \phi(g)$.
\end{definition}
Denote the hypersurfaces defined by $f$ and $g$ by $Z_f$ and $Z_g$. Mather and Yau proved that the contact equivalence is the same as the biholomorphic equivalence for the germs of hypersurfaces.
\begin{prop}(\cite[Proposition 2.5]{BY90})\label{cont=>bihol}
Let $f,g\in \CC\{x_1,\ldots,x_n\}$. Then $f\sim^c g$ if and only if $(Z_f,0)$ and $(Z_g,0)$ are biholomorphically equivalent.
\end{prop}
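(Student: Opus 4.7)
The plan is to prove the two directions separately by exploiting the anti-equivalence between analytic germs at the origin and their local rings $\CC\{x_1,\ldots,x_n\}/I$, and the easy fact that contact equivalence is precisely the condition ensuring an isomorphism at the level of local rings.

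For the easier direction $f\sim^c g\Rightarrow (Z_f,0)\cong (Z_g,0)$, suppose $f=u\cdot\phi(g)$ with $\phi\in\mathrm{Aut}(\CC\{x_1,\ldots,x_n\})$ and $u$ a unit. Then $\phi$ carries the principal ideal $(g)$ to $(\phi(g))=(u\phi(g))=(f)$ because multiplication by a unit does not change the ideal, hence $\phi$ descends to a ring isomorphism $\bar\phi:\CC\{x_1,\ldots,x_n\}/(g)\xrightarrow{\sim}\CC\{x_1,\ldots,x_n\}/(f)$. Under the standard anti-equivalence between germs of complex analytic spaces at $0$ and their analytic local rings, $\bar\phi$ corresponds to a biholomorphism $(Z_f,0)\cong (Z_g,0)$.

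For the harder direction, start from a biholomorphic equivalence of germs; this is the same datum as a $\CC$-algebra isomorphism $\bar\phi:\CC\{x_1,\ldots,x_n\}/(g)\xrightarrow{\sim}\CC\{x_1,\ldots,x_n\}/(f)$. The key step is to lift $\bar\phi$ to an automorphism $\phi$ of $\CC\{x_1,\ldots,x_n\}$: choose any lifts $\phi(x_i)\in\CC\{x_1,\ldots,x_n\}$ of $\bar\phi(\bar x_i)$, which defines a ring homomorphism by universal property. Since $\bar\phi$ restricts to an isomorphism on $\fm/\fm^2$ (where $\fm$ is the maximal ideal), so does $\phi$, and the holomorphic inverse function theorem then implies that $\phi$ is an automorphism of $\CC\{x_1,\ldots,x_n\}$.

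Once $\phi$ is in hand, the equality $\phi((g))=(f)$ follows because $\phi$ is an automorphism reducing to the isomorphism $\bar\phi$ whose kernel description forces $\phi^{-1}((f))=(g)$. Writing $\phi(g)=u\cdot f$, the equality $(uf)=(\phi(g))=(f)$ of principal ideals in the local ring $\CC\{x_1,\ldots,x_n\}$ forces $u$ to be a unit, giving $f\sim^c g$ after relabeling (or apply the symmetric argument to $\phi^{-1}$). The main subtlety I expect is the lifting step: one must verify that an arbitrary choice of lifts for $\bar\phi(\bar x_i)$ really does yield an automorphism, for which the $\fm/\fm^2$ calculation combined with the inverse function theorem for convergent power series is the standard device; the rest is a bookkeeping argument about principal ideals in a local ring.
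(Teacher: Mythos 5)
The paper itself gives no proof of this proposition; it is quoted from Benson--Yau \cite{BY90} (see also \cite[Lemma 2.10]{GLS06}), and your argument is in fact the standard one behind that citation: translate biholomorphy of germs into isomorphism of the analytic algebras $\CC\{x\}/(f)$ and $\CC\{x\}/(g)$, lift to an automorphism $\phi$ of $\CC\{x\}$, and do the bookkeeping with principal ideals in a domain. The forward direction, the identity $\phi((g))=(f)$ once a lifting automorphism exists (via $\ker(\pi_f\circ\phi)=\ker(\bar\phi\circ\pi_g)=(g)$), and the conclusion that $u$ is a unit (using that $\CC\{x\}$ is a local domain and $f\neq 0$) are all correct.

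The one genuine flaw is the lifting step as you state it: it is \emph{not} true that an arbitrary choice of lifts of $\bar\phi(\bar x_i)$ yields an automorphism. Your $\fm/\fm^2$ argument identifies the map induced by $\phi$ on $\fm/\fm^2$ with the map induced by $\bar\phi$ on $\fm/(\fm^2+(g))\to\fm/(\fm^2+(f))$, and these coincide only when $f,g\in\fm^2$. If $f\notin\fm^2$ the identification fails and the claim is false: take $n=1$, $f=g=x$, $\bar\phi=\mathrm{id}_{\CC}$; since $\bar x=0$ in $\CC\{x\}/(x)$, the lift $x\mapsto x^2$ is a legitimate choice but is not an automorphism. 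So you should either (i) restrict to $f,g\in\fm^2$, which is automatic in the paper's application (isolated hypersurface singularities, where $df(0)=0$), or (ii) dispose of the remaining cases separately: if $f$ is a unit or $f\in\fm\setminus\fm^2$, the quotient ring is zero or regular, which forces the same for $g$, and then contact equivalence is immediate (both units, both zero, or both coordinates after an application of the implicit function theorem); alternatively one can choose the lifts more carefully rather than arbitrarily. With that case distinction (and the trivial case $f=0$ noted for the unit argument), your proof is complete and agrees with the argument in \cite{BY90} and \cite{GLS06}.
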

For $f,g\in(x_1,\ldots,x_n)\subset\CC\{x_1,\ldots,x_n\}$ with isolated critical points, we denote the Milnor algebras by $M_f$ and $M_g$. The \emph{Tjurina algebra} $T_f$ of $f$  is defined to be the quotient algebra of $M_f$ by the ideal generated by $f$. 
The following famous result is due to Mather and Yau.
\begin{theorem}($\mathbf{Mather-Yau}$, c.f.  Theorem 2.26 \cite{GLS06})\label{MY}
Let $f,g$ be defined as above. Then $f\sim^c g$ if and only if $T_f\cong T_g$.
\end{theorem}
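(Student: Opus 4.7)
The plan is to prove both implications of the Mather--Yau theorem. The forward direction is straightforward: given $f = u \cdot \phi(g)$ with $u \in \CC\{x_1,\ldots,x_n\}^*$ and $\phi$ an automorphism, a Leibniz/chain-rule calculation shows that $\phi$ carries the Tjurina ideal $(g, \partial_1 g, \ldots, \partial_n g)$ into $(f, \partial_1 f, \ldots, \partial_n f)$ (the unit $u$ is absorbed modulo $f$), so $\phi$ descends to an isomorphism $T_g \xrightarrow{\sim} T_f$ of $\CC$-algebras.

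The substantive direction is the converse. Step one is to promote the algebra isomorphism $T_f \cong T_g$ to an automorphism $\phi$ of $\CC\{x_1,\ldots,x_n\}$ that sends the Tjurina ideal of $g$ onto that of $f$. Since $\CC\{x_1,\ldots,x_n\}$ is a regular local ring, any isomorphism of quotients by ideals contained in the maximal ideal lifts: choose preimages of the images of the coordinates and extend multiplicatively, using Hensel's lemma / inverse function theorem to make the resulting endomorphism an automorphism (the induced map on $\fm/\fm^2$ is invertible because $T_f, T_g$ have the same Zariski cotangent space). After applying this automorphism, we may assume $f$ and $g$ have \emph{equal} Tjurina ideals $J = (f, \partial_i f) = (g, \partial_j g)$ in $\CC\{x_1,\ldots,x_n\}$.

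Step two is the homotopy argument of Mather. Form the one-parameter family
\begin{equation*}
f_t = (1-t)f + t g, \qquad t \in [0,1].
\end{equation*}
Because $f, g \in J$, the derivative $\dot f_t = g - f$ lies in $J$; I would argue that $(f_t, \partial_i f_t) = J$ for all $t$ in a small interval (using Nakayama applied to $J / (\text{tangent contribution of } f_t)$, exploiting that $J$ is $\fm$-primary by isolatedness of the singularity). The infinitesimal contact-triviality criterion then gives a $t$-dependent vector field $\xi_t$ and unit $u_t$ on $\CC^n$ realizing $\dot f_t = \xi_t(f_t) + u_t f_t$. Integrating $\xi_t$ (via the existence theorem for time-dependent analytic ODEs) yields an automorphism $\phi$ and a unit $u$ with $f = u\cdot \phi(g)$.

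The main obstacle is Step two: obtaining the criterion globally in $t \in [0,1]$ rather than only infinitesimally, and ensuring the integrated flow remains an analytic automorphism of the germ. This is resolved by Mather's finite determinacy theorem: the isolated-singularity assumption implies that $\fm^{k+1} \subset \fm \cdot J$ for some $k$, so the problem reduces to a finite-dimensional one on the jet space $J^k(\CC^n, 0)$, where the contact group acts algebraically. On this jet space the orbit of $f_t$ coincides with the locus of germs with Tjurina algebra isomorphic to $T_f$, and connectedness of $[0,1]$ plus openness of the orbit (a consequence of the infinitesimal computation) forces $f_0$ and $f_1$ to lie in the same orbit, i.e.\ $f \sim^c g$.
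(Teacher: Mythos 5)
The paper itself gives no proof of this statement — it is quoted as the classical Mather--Yau theorem from \cite{GLS06} — so your proposal has to stand on its own. The easy direction and Step one are essentially sound, but your justification for invertibility of the lift is not: the Tjurina ideal need not be contained in $\fm^2$ (for an $A_1$ point $T_f=\CC$, and $\partial_i f$ can be linear), so an arbitrary lift of an abstract isomorphism $T_g\to T_f$ can have non-invertible linear part. One must choose the lift so that its linear part carries $\bigl((g,\partial g)+\fm^2\bigr)/\fm^2$ isomorphically onto $\bigl((f,\partial f)+\fm^2\bigr)/\fm^2$; this is the standard lifting lemma for analytic algebras (GLS, Lemma 1.23), and then the chain rule gives, as you say, that after a coordinate change the two Tjurina ideals are literally equal.

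The genuine gap is in Step two. Along the real segment $f_t=(1-t)f+tg$ you only know $(f_t,\partial_1 f_t,\ldots,\partial_n f_t)\subseteq J$; equality can fail at intermediate $t$ (the Tjurina number can jump up), and the set of good $t$ inside $[0,1]$ may a priori be disconnected, so ``equality on a small interval plus connectedness of $[0,1]$'' does not close the argument. The standard repair is to let $t$ range over $\CC$: since the ideal of $f_t$ is contained in $J$ for every $t$ and $\tau(f_t)$ is Zariski upper semicontinuous, the bad locus is a proper Zariski-closed, hence finite, subset of $\CC$, so the good locus is open, connected, and contains $0$ and $1$; there the infinitesimal relation $\dot f_t=g-f\in(f_t,\partial f_t)$ can be solved with coefficients analytic in $t$ and integrated, giving local contact triviality and hence $f\sim^c g$. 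Your proposed fix via jet spaces does not work as stated: contact orbits in jet space are not open (their codimension is essentially $\tau$), and the assertion that the orbit of $f_t$ ``coincides with the locus of germs with Tjurina algebra isomorphic to $T_f$'' is exactly the Mather--Yau statement being proved, so that step is circular. The finite-dimensional tool that does apply at that point is Mather's lemma on Lie group actions (a connected submanifold whose tangent spaces lie in the orbit tangent spaces, with constant orbit dimension, is contained in a single orbit), used after the complex-parameter reduction — not openness of orbits.
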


Combing Proposition \ref{cont=>bihol} and Theorem \ref{MY}, we get that the Tjurina algebra determines the germ of the hypersurface singularity. If we assume that $f$ is quasi-homogeneous, then $f$ represents a zero class in the Milnor algebra $M_f$. In this case, the Milnor algebra $M_f$ and the Tjurina algebra $T_f$ are isomorphic. Therefore, the germ of the quasi-homogeneous hypersurface singularity is determined by its Milnor algebra.

We recall an important lemma in the Morita theory of (dg-)algebras. It was originally proved by Happel \cite{Ha89} and Buchweitz \cite{Bu03}, and later generalized to dg-algebras by Keller \cite{Keller03}.
\begin{lemma}(\cite[Section 4.5]{Keller03})\label{biCart}
Let $G$ be the dg-algebra of upper-triangular matrices and $C^\bullet(A,X,B)$ be the complex defined in Section \ref{subsec:HHAinfty}. Recall the morphisms $\mu_A,\mu_B$ and $\alpha,\beta$ defined in Section
\ref{subsec:HHAinfty}.
There is an exact triangle
\[
\xymatrix{
CC^\bullet(G,G)\ar[rr]^{\mu^*_A\oplus\mu^*_B} && CC^\bullet(A,A)\oplus CC^\bullet(B,B)\ar[rr]^{\alpha+\beta} && C^\bullet(A,X,B) .
}\]
In particular, we have a long exact sequence of Hochschild cohomology groups:
\[
\xymatrix{
\ldots\ar[r]&HH^n(G,G)\ar[r] & HH^n(A,A)\oplus HH^n(B,B)\ar[r] &\\ 
\Hom_{\D(A\otimes B^{\rm{op}})}(X,X[n])\ar[r] &\ldots
}
\]
\end{lemma}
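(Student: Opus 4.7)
The strategy is to realize the triangle as coming from applying $\RHom_{G^e}(-,G)$ to a short exact sequence of $G$-bimodules that reflects the block decomposition of $G$. Write $e_1,e_2\in G$ for the diagonal idempotents, so that $e_1+e_2=1$, $e_1Ge_1=A$, $e_2Ge_2=B$, $e_1Ge_2=X$ and $e_2Ge_1=0$. A direct calculation identifies $Ge_1\otimes_A e_1G\cong A\oplus X$ and $Ge_2\otimes_B e_2G\cong X\oplus B$ as $G$-bimodules, and comparing with $G=A\oplus X\oplus B$ yields the short exact sequence of $G^e$-modules
\begin{equation*}
0\longrightarrow X\longrightarrow Ge_1\otimes_A e_1G\,\oplus\,Ge_2\otimes_B e_2G\longrightarrow G\longrightarrow 0,
\end{equation*}
in which the right-hand map is the sum of the two multiplications and $X$ embeds anti-diagonally into the overlap.

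Now apply $\RHom_{G^e}(-,G)$, modelled concretely via the bar resolution $\BB(G)$. Standard adjunction for idempotent-induced extension of scalars gives
\begin{equation*}
\RHom_{G^e}(Ge_1\otimes_A^{\bL} e_1G,\,G)\cong \RHom_{A^e}(A,\,e_1Ge_1)=CC^\bullet(A,A),
\end{equation*}
and symmetrically on the $B$ side. For the kernel, the relations $X=e_1Xe_2$ and $e_2Ge_1=0$ force every $G^e$-linear map $X\to G$ to factor through $e_1Ge_2=X$, so that $\RHom_{G^e}(X,G)\cong \RHom_{A\otimes B^{\rm op}}(X,X)=C^\bullet(A,X,B)$. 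Together these identifications deliver the required distinguished triangle of cochain complexes, and the stated long exact sequence follows on passing to cohomology.

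The remaining task is to verify that the connecting arrows agree with the explicit maps $\mu_A^*\oplus\mu_B^*$ and $\alpha+\beta$ of Section \ref{subsec:HHAinfty}. The first identification is essentially tautological: the inclusions $\mu_A$ and $\mu_B$ are precisely the bar-complex incarnations of the above adjunction, so dualizing recovers them. For the second, the connecting homomorphism measures how a Hochschild cocycle on $A$ (respectively on $B$) fails to extend across the bimodule $X$; unwinding this obstruction term by term through the $A_\infty$-bimodule operations $m^X_{l,m}$ reproduces precisely the definitions of $\alpha$ and $\beta$. For the general $A_\infty$-setting, one replaces $\BB(G)$ by its $A_\infty$-bar construction and upgrades the short exact sequence above to a distinguished triangle of $A_\infty$-$G^e$-bimodules; the argument then proceeds identically. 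The principal technical obstacle is sign bookkeeping — particularly in verifying on the nose that $\alpha$ and $\beta$ are chain maps and that the twisted Hochschild differential on $C^\bullet(A,X,B)$ is faithfully transported from the bar complex — which I would handle using the conventions of \cite{Mer16} and \cite{Keller03}.
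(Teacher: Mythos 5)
The paper offers no proof of this lemma (it simply cites Keller, Section 4.5), and your argument is exactly the standard one behind that citation: the $G$-bimodule short exact sequence $0\to X\to Ge_1\otimes_A e_1G\,\oplus\,Ge_2\otimes_B e_2G\to G\to 0$, dualized by $\RHom_{G^e}(-,G)$, with the paper's $\mu_A,\mu_B$ being precisely the bar-resolution incarnations of your two multiplication maps, so the identification of the arrows with $\mu_A^*\oplus\mu_B^*$ and $\alpha+\beta$ comes out as you describe. The only point where you are brief is the genuinely $A_\infty$ case, where there is no literal short exact sequence of $G^e$-modules and one must work with the bar-construction filtration from the outset (which is exactly why the paper defines $\mu_A,\mu_B$ at the level of $\BB(G)$); your proposed replacement by the $A_\infty$-bar construction is the right fix and matches the cited source.
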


\begin{proof}[\bf{Proof of Theorem \ref{mainthm}}]
Let $X$ be the germ of the hypersurface singularity defined by $W\in\CC[[x_1,\ldots,x_4]]$.
It was proved by Dyckerhoff that $k^{\st}$ is a compact generator of $\MF^\infty(W)$ (Corollary 4.12 \cite{Dyc09}). And by Proposition 5.10 of \cite{IW10}, $N^{\st}$ is a compact generator of $\MF^\infty(W)$. For simplicity, we denote $\Ac$ by $A$ and $B_W$ by $B$.  By the Morita theory of dg-categories (Theorem 3.11 \cite{Keller06}), the $A\otimes B^{\rm{op}}$ module $X:=\Hom_{\MF(W)}(k^{\st},N^{\st})$ induces an equivalence of triangulated categories:
\[
-\otimes_{A}^L X: \D A\cong \D B.
\]
This implies that $\alpha$ and $\beta$ are quasi-isomorphisms. By the exact triangle in Lemma \ref{biCart}, $\mu_A^*$ and $\mu_B^*$ are both quasi-isomorphisms. The map 
\[
\phi_X=\mu_A^*\circ(\mu_B^*)^{-1}
\] defines an isomorphism $HH^\bullet(A,A)\cong HH^\bullet(B,B)$. By Proposition \ref{mumap}, $\phi_X(m^B)=m^A$. If we identify $HH^\bullet(B,B)$ with the Milnor algebra $M_W$, then $m_B=W$ by Corollary \ref{m=W}. So $m_A=\phi_X(W)$.

Beginning with the contraction algebra $A=\Ac$, we reconstruct the Milnor algebra $M_W$ as the Hochschild cohomology $HH^\bullet(A,A)$, and the class of $W$ by $m_A$. Then by Theorem \ref{MY}, the germ of the hypersurface singularity is determined.
\end{proof}
\begin{remark}
We believe that the contraction algebra (with an appropriate $A_\infty$-enhancement) should determine the germ of the singularity underlying a flopping contraction of arbitrary dimension with one dimensional fiber. For higher dimensional flopping contraction with $Y$ being a hypersurface, we can show that Theorem \ref{inftycontr} still holds. However, the Milnor algebra should be replaced by the twisted de Rham complex $(\Omega^\bullet_V,dW\wedge)$. 
\end{remark}

\end{document}